\newcommand{\te}{Teich\-m\"ul\-ler}
\newcommand{\tes}{Teich\-m\"ul\-ler's}
\newcommand{\D}{\Delta}
\theoremstyle{remark}
\newcommand{\Car}{Carath\'eodory}
\newtheorem{theorem}{Theorem}
\theoremstyle{definition}
\newtheorem{definition}{Definition}
\theoremstyle{definition}
\newtheorem{lemma}{Lemma}
\begin{document}

\title{Carath\'eodory's and Kobayashi's metrics on Teichm\"uller space}

\author{Frederick P. Gardiner}
\address{Department of
Mathematics, Graduate 
Center of CUNY, NY 10016.}
\curraddr{}


\thanks{
      I am extremely grateful to William Harvey for his help in the preparation of this manuscript.  Also to Nikola Lakic who is primarily responsible for developing the theory of the \te\ density which provides the bridge for comparing the lengths of certain curves in \te\ space to corresponding curves on Riemann surfaces (see appendix D). Also thanks are due to Clifford Earle, Irwin Kra and Stergios Antonakoudis for helpful conversations and comments.}

\subjclass[1991]{Primary  32G15; secondary 30C70, 30C75.}

  %

\begin{center}

version6, April 16th, 2019

\end{center}

\begin{abstract}  Carath\'eodory's and  Kobayashi's infinitesimal metrics on \te\ spaces of dimension two or more  are never equal in the direction of any tangent vector defined by a separating cylindrical differential. 
\end{abstract}

\maketitle

\begin{section}*{Introduction}  
This paper deals with infinitesimal and global conformal metrics on a complex manifold $M.$ In the case at hand M will be either a \te\ space or a Riemann surface.  We use the notation $d(\cdot,\cdot)$  for both global and infinitesimal metrics. If the first entry in the parentheses is a point  $p$ in $M$ and the second  is a tangent vector $V$ at $p,$ then $d(p,V)$ will be an infinitesimal form,  namely, the assignment of a  norm $\|V\|_p$  for the tangent vector  $V$ at $p.$  It will depend in a Lipschitz manner on $p.$  Metrics of this type are called Finsler metrics.  If both entries of $d(\cdot,\cdot)$ are points $p_1$ and $p_2$ in $M$ then the meaning changes;  
$d(p_1,p_2)$ means the integrated form of $d(p,V).$  By this we mean 
$d(p_1, p_2)$ is the infimum of the arclength integrals
\begin{equation}\label{def100}
\int_{\gamma} d(\gamma(t), \gamma'(t)) dt,
\end{equation}
where the infimum is taken over all piecewise differentiable curves $\gamma(t)$ that join $p_1$ to $p_2$ in $M.$

The most important example is when $M$ is conformal to the open unit disc in the complex plane ${\mathbb C},$ that is, 
$$M \cong \Delta= \{z \in {\mathbb C}: |z|<1\}$$ and the metric is the Poincar\'e metric, which we denote by $\rho.$ 
With this convention the infinitesimal form is 
\begin{equation}\label{200}\rho(p,V)=\frac{|dp(V)|}{1-|p|^2},\end{equation}
where $V$ is a tangent vector at a point $p$ with $|p|<1.$  The 
global form of $\rho$ when the disc is $\Delta$ is 
\begin{equation}\label{one}\rho(z_1,z_2)=\frac{1}{2} \log \frac{1+r}{1-r},
\end{equation}
where $r=\frac{|z_1-z_2|}{|1-\overline{z_1}z_2|}.$

The infinitesimal form $\rho$  induces  infinitesimal forms on any 
complex manifold $M$ in two ways.  The first uses  the family $\mathcal{F}$ of all holomorphic functions  $f$ from $\Delta$ to $M.$  It is commonly called Kobayashi's metric and we denote it by 
$K.$ The second uses  the family $\mathcal{G}$ of all holomorphic functions $g$ from $M$ to $\Delta.$  It is commonly called  \Car's metric and we denote it by $C.$    

    In general if $f:M \rightarrow N$ is a mapping from a manifold $M$ to a manifold $N$ and $f(p)=q,$
    we denote by $(df)_p$ the induced linear map from the fiber of  tangent space to $M$ at $p$ to the fiber of the tangent space to $N$ at $q.$ 

For a point $p$ in a complex manifold $M$ we let  
${\mathcal F}({p})$ be the subset of $f \in {\mathcal F}$ for which $f(0)={p}$ and  
 ${\mathcal G}(p)$ consist of those functions $g \in {\mathcal G}$ for which $g({p})=0.$  Given a point $p \in M$ and a vector $V$ tangent to $M$ at $p,$ the infinitesimal form $K$ is  defined by the infimum  

 \begin{equation}\label{K}
 K_M(p,V) = \inf_{ f \in {\mathcal F(p)}} \left\{ \frac{1}{|a|}:  df_0(1)=a V {\rm \ and \ } f \in {\mathcal F}(p)\right\}  
 \end{equation}
and the infinitesimal form $C$ is defined by the supremum 
 \begin{equation}\label{C111}
 C_M(p,V) = \sup_{ g \in {\mathcal G(p)}} \left\{|b|:  dg_p(V)=b {\rm \ and \ } g \in {\mathcal G}(p)\right\}.
 \end{equation}
 When $M=\Delta$  Schwarz's lemma shows that the metrics $K_{\Delta},$ $C_{\Delta}$ and  $\rho$ coincide.

   For general manifolds $M,$ Schwarz's lemma applied to $g \circ f : \Delta \rightarrow \Delta$ where  $f \in {\mathcal F}(\tau)$ and $g \in {\mathcal G}(\tau)$ implies 
 \begin{equation}\label{S111}|b| \leq 1/|a| 
 \end{equation}
 and so the formulas (\ref{K}), (\ref{C111}) and (\ref{S111}) together imply  for all 
 complex manifolds $M$
 \begin{equation}\label{C<K}
 C_M(p,V) \leq K_M(p,V).
 \end{equation}
 
 The two metrics $C$ and $K$ represent categorical extremes of a general property of so called Schwarz-Pick metrics.  $C$ is the smallest conformal metric for which holomorphic mappings are contracting and $K$ is the largest.  This idea is explained further in Appendix A.

   In the case $M$ is  a \te\ space $Teich(R)$ one can use the Bers embedding  \cite{Ahlforsbook5, Bers11} to obtain a reverse inequality and ends up with the double inquality
    \begin{equation}\label{equivCK}
 (1/3) K_{Teich(R)}(\tau,V) \leq C_{Teich(R)}(\tau,V) \leq K_{Teich(R)}(\tau,V).
 \end{equation}
 Since the arbitrary points are now points in a \te\ space, we have switched notations and have denoted those points  by the letter  $\tau$ instead of $p.$  We refer to Appendix B for the proof of the left hand inequality in  (\ref{equivCK}).

\bigskip

  A global metric $d$  satisfying mild smoothness conditions  has an infinitesimal form  given by the limit:
 $$ d(\tau,V) = \lim_{t \searrow 0} d(\tau,\tau+tV).$$
 (See \cite{EarleHarrisHubbardMitra}.)
  When this is the case the integral of the infinitesimal form $d(\tau, V)$  gives back a global metric 
  $\overline{d}.$   By definition the metric $\overline{d}(\tau_1,\tau_2)$ on pairs of points $\tau_1$ and $\tau_2$  is the infimum of the arc lengths of arcs in the manifold that join 
  $\tau_1$ to $\tau_2.$    In general  
  $\overline{d}$ is symmetric, satisfies the triangle inequality, determines the same topology as $d$ and one always has the inequality
  $\overline{d} \geq d.$  In many situations $\overline{d}$ is larger than $d,$ 
  but, when $M$ is a \te\ space and $d$ is \tes\ metric $d$ and $\overline{d}$ coincide, \cite{EarleEells} \cite{OByrne}.  Moreover,  for all \te\ spaces \tes\ and Kobayashi's metrics coincide 
  (see \cite{Royden} for finite dimensional cases and  \cite{Gardiner3, Gardinerbook} for  infinite dimensional cases).

The main result  of this paper is the following theorem.
\begin{theorem}\label{maintheorem} {\it Assume  $Teich(R)$ has dimension more than $1$ and  $V$ is a tangent vector corresponding to a separating cylindrical differential.  Then there is strict inequality;}
\begin{equation}\label{inequality}
C_{Teich(R)}(\tau,(V) < Kob_{Teich(R)}(\tau,V).
\end{equation}
\end{theorem}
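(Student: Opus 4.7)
The plan is to argue by contradiction. Suppose that $C_{Teich(R)}(\tau,V) = K_{Teich(R)}(\tau,V)$. Let $q$ be the cylindrical separating holomorphic quadratic differential dual to $V$, and let $f\colon \Delta \hookrightarrow Teich(R)$ be the Teichm\"uller-disk embedding corresponding to $q$, normalized so that $f(0) = \tau$ and $df_0(1)$ is the appropriate positive multiple of $V$. Since the Kobayashi and Teichm\"uller metrics coincide, $f$ is a holomorphic isometric embedding realizing the infimum in the definition of $K_{Teich(R)}(\tau,V)$. The Bers embedding (Appendix B) makes $Teich(R)$ a bounded domain, so $\mathcal{G}(\tau)$ is normal and Montel's theorem supplies an extremal $g \in \mathcal{G}(\tau)$ realizing $C_{Teich(R)}(\tau,V)$. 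Under the assumed equality, $g \circ f \colon \Delta \to \Delta$ fixes the origin with derivative of modulus $1$, and the Schwarz lemma forces $g \circ f = \mathrm{id}_\Delta$ after normalizing a rotation. Hence $g$ is a holomorphic retraction of $Teich(R)$ onto $\Delta_q := f(\Delta)$.

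The task then reduces to showing that, under the separating-cylindrical hypothesis, no such retraction exists. Two structural features are central. First, the Dehn twist $T_\gamma$ along the separating core curve $\gamma$ of $q$ is a biholomorphism of $Teich(R)$ that preserves $\Delta_q$ and acts there as a parabolic M\"obius transformation $\phi := T_\gamma|_{\Delta_q}$, with unique boundary fixed point $\xi \in \partial \Delta$ corresponding to the pinching of $\gamma$. Second, since $\gamma$ is separating and $\dim_{\mathbb{C}} Teich(R) \geq 2$, a local plumbing (or Fenchel--Nielsen) decomposition near the pinching cusp presents $Teich(R_1) \times Teich(R_2)$ (with $R_1, R_2$ the two components of $R\setminus\gamma$) as a positive-dimensional base transverse to $\Delta_q$; the twist $T_\gamma$ translates the plumbing-parameter direction and acts trivially on the base.

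To extract the contradiction, I would iterate: the maps $g_n := \phi^{-n} \circ g \circ T_\gamma^n$ are again holomorphic retractions of $Teich(R)$ onto $\Delta_q$, and normality gives a subsequential limit $g_\infty\colon Teich(R) \to \overline{\Delta}$ with $g_\infty|_{\Delta_q} = \mathrm{id}$. The plan is to show that for $\sigma$ in the complementary base factor $Teich(R_1) \times Teich(R_2)$, the orbit $g(T_\gamma^n \sigma)$ converges to $\xi$ fast enough that $g_n(\sigma) = \phi^{-n}(g(T_\gamma^n \sigma)) \to \xi$ as well, whence $g_\infty \equiv \xi$ on this positive-dimensional subvariety. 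This is incompatible with $g_\infty$ being simultaneously holomorphic into $\overline{\Delta}$ and the identity on the transverse disk $\Delta_q$: a holomorphic map attaining a boundary value on a positive-dimensional set cannot remain non-degenerate in a transverse direction.

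The main obstacle is the quantitative control of the boundary-convergence rate of $g(T_\gamma^n \sigma)$ for $\sigma$ in the base. This is precisely the point at which Lakic's Teichm\"uller density from Appendix D enters: it supplies estimates relating Carath\'eodory lengths of Dehn-twist orbits in $Teich(R)$ to geometric lengths of corresponding curves on the underlying Riemann surface $R$, and the separating hypothesis yields a \emph{strict} improvement over what a purely parabolic-iteration estimate would give. Without the separating hypothesis this improvement disappears and the base factor $Teich(R_1) \times Teich(R_2)$ collapses, which is consistent with the theorem's essential use of separating cylindrical differentials. Making this density comparison sharp enough to force the stated convergence is the technical heart of the argument.
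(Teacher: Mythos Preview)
Your opening moves match the paper: assume equality, get the holomorphic retraction $g\circ f=\mathrm{id}_\Delta$ via Schwarz, and exploit the Dehn twist along $\gamma$. After that you diverge, and the divergence matters.

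The paper does \emph{not} iterate $T_\gamma$ on $Teich(R)$ and chase normal limits. Instead it passes to the Bers fiber space $F(Teich(R))\cong Teich(R-p)$ with $p$ on the core curve, and then \emph{unrolls} the maximal cylinder: for each $n$ it builds a new surface $R_n$ (the $n$-fold cyclic cover of $R$ along $\gamma$) containing a maximal annulus $\mathcal{A}_{r^n}$. The retraction hypothesis is shown to propagate to each $Teich(R_n-p_n)$, and the contradiction is obtained by estimating the length of the core curve $\beta_n\subset\mathcal{A}_{r^n}$ in the Teichm\"uller density $\lambda_{\mathcal{A}_{r^n}}$ two ways. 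The lower bound $(1/2)\rho_{\mathcal{A}_{r^n}}\le\lambda_{\mathcal{A}_{r^n}}$ gives $\lambda_n(\beta_n)\gtrsim n$; the upper bound $\lambda_{\mathcal{A}_{r^n}}\le\rho_{\mathcal{A}_{r^n}}$ combined with the elementary estimate $C_{\mathcal{A}_{r^n}}(1)\le(1/n)\cdot r/(r-1)$ (Lemma~\ref{Car'smetric}) gives $\lambda_n(\beta_n)\le M$ independent of $n$. The Teichm\"uller density is used \emph{on the Riemann surface} (specifically on the annulus), not on Teichm\"uller space, and its role is precisely to transfer the Carath\'eodory/Kobayashi discrepancy on annuli back to a statement about lengths in the fiber of $\Psi$.

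Your proposed mechanism is different and, as you concede, incomplete at the decisive step. The issue is not merely technical. To make $\phi^{-n}g(T_\gamma^n\sigma)\to\xi$ you need $g(T_\gamma^n\sigma)$ to approach $\xi$ \emph{horocyclically faster} than $\phi^n(0)$ does; since $\phi$ is parabolic this is a genuine rate question, and nothing in the setup forces it. Your appeal to the Teichm\"uller density here is misplaced: that density lives on $R$ (via the fiber of $\Psi:Teich(R-p)\to Teich(R)$) and compares to the Poincar\'e metric of $R$, not to horocyclic distances of $T_\gamma$-orbits in $Teich(R)$. There is no evident way to convert the inequality $(1/2)\rho_R\le\lambda_R\le\rho_R$ into the horocycle estimate you need. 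Moreover, the ``base'' $Teich(R_1)\times Teich(R_2)$ you invoke is a boundary stratum of augmented Teichm\"uller space, not a complex submanifold of $Teich(R)$ on which a maximum-principle argument can be run; the plumbing picture only gives this product asymptotically near the cusp, and $g_\infty$ need not extend there.

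In short: the paper trades your single-surface iteration for a family of surfaces $R_n$ whose maximal annuli have moduli growing like $n$, and the entire quantitative content is the elementary annulus estimate $C_{r^n}(1)\le(1/n)C_r(1)$ together with the universal constant $1/2$ in Theorem~\ref{thm2}. Your outline does not yet contain an analogue of either ingredient.
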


\vspace{.1in} 

In some ways this result is surprising partly because of  the metric equivalence implied by (\ref{equivCK}) explained in Appendix B  and partly because of  Kra's equality theorem \cite{Kra1} explained in Appendix C. That theorem shows there are many directions
in which the two infinitesimal forms for $C$ and $K$ are equal.  

To explain Theorem \ref{maintheorem} we must define a  {\it separating cylindrical differential.}   Since any simple closed curve $\gamma$ embedded in a Riemann surface divides it into one or two components, we call $\gamma$ separating if this number is two.  No matter whether $\gamma$ is separating or not we can try to maximize the modulus of a cylinder in $R$  with core curve  homotopic to 
$\gamma.$  If this modulus is bounded  it turns out that there is a unique embedded cylinder of maximal modulus. By a 
theorem of Jenkins  (\cite{Jenkins1},\cite{Jenkins2}) and Strebel  \cite{Strebelbook} it  corresponds to a unique  quadratic differential $q_{\gamma}$ which is holomorphic on $R$ with the following properties.  Its noncritical horizontal trajectories  all have length $2\pi$ in the metric $|q_{\gamma}|^{1/2},$  these trajectores fill the interior of the cylinder and no other embedded cylinder in the same homotopy class has larger modulus.   If for a separating simple closed curve such a cylinder exists and if $Teich(R)$ has dimension more than 1 then we call $q_{\gamma}$ a separating cylindrical differential.

We use the notation ${\mathcal A}_r=\{z: 1/r < |z| <r\}$ for the standard annulus which has modulus
$(1/\pi) \log r$ and ${\mathcal C}_r$ for the standard cylinder which is conformal to ${\mathcal A}_r$ and which has 
circumference $2\pi$ and height $2\log r.$  The conformal map $z \mapsto \zeta$ is realized by 
putting $$\zeta = - i\log z= - i(\log |z| + i \arg z)$$ to get a point in the infinite strip 
$$\{\zeta=\xi + i \eta: -\log r < \eta <\log r\}$$  and
then factoring the strip by the translation $\zeta \mapsto \zeta + 2 \pi.$

The rough idea for the proof of Theorem \ref{maintheorem} is to move in the direction of  a separating cylindrical differential towards points of $Teich(R)$ that represent surfaces containing tall cylinders with constant circumference.  These directions point to where anomalous aspects of the extremal length boundary of \te\ space become interesting (see for example \cite{LiuandSu}, \cite{Walsh1}) and \cite{Minsky4}.    
  By deforming in the direction of a maximal annulus ${\mathcal A}_r$ in the given homotopy class of a separating closed curve $\alpha$ in $R$  we are able to 
construct a  sequence of waist curves $\beta_n(t)$ for cylinders ${\mathcal C}_{r^n}$ lying in 
Riemann surfaces $R_n$  constructed from $R$  that have complexity roughly equal  to $n$ times the complexity of $R.$

By estimating the length of each $\beta_n$ with respect to the \te\ density 
$\lambda_{{\mathcal A}_{r^n}},$ where 
 \begin{equation}\label{basic} (1/2) \rho_{{\mathcal A}_{r^n}} \leq \lambda_{{\mathcal A}_{r^n}} \leq \rho_{{\mathcal A}_{r^n}}
 \end{equation} and where in general $\rho_R$ is Poincar\'e's density on any Riemann surface $R$ we arrive at a contradiction.  
 In \cite{GardinerLakic3} the \te\ density $\lambda_R$ is defined for any Riemann surface $R$ although we apply it only when  $R$ is conformal to one of the annuli ${\mathcal A}_{r^n}.$  Since constant (1/2) in  (\ref{basic})  is independent of $n,$  it  provides a bridge enabling  comparison of  lengths of  curves in maximal annuli ${\mathcal A}_{r^n}$ to lengths of curves in \te\ spaces $Teich(R_n).$
 
 \bigskip
 
 The \te\ density  $\lambda(p)dp$ at a point $p \in R$ is the restriction of the \te\ infinitesimal form of  \tes\ metric on $Teich(R-p)$ to the fiber of the forgetful map
 $$\Psi : Teich(R-p) \rightarrow Teich(R).$$  More detail is given in section 2. 
 
  On the one hand the right hand side of  inequality (\ref{basic}) and a computation shows that this length increases  proportionately to
 $n.$    On the other hand, the left hand inequality of (\ref{basic})  and the assumption that
           \begin{equation}\label{falseassumption} C_{Teich(R)}(\tau, V) = K_{Teich(R)}(\tau,V), \end{equation} shows that this number is bounded independently of $n.$

   This paper is divided into ten sections.  Section 1 explains a consequence 
   of the equality $$C_M(\tau,V)=K_M(\tau,V)$$ when $M=Teich(R),$ the \te\ space of a 
   Riemann surface $R$ and where  $V$ is a tangent vector to $M$ at $\tau.$  Assume $f(0) = [0] {\rm \ and \ } f(t)=[t|q|/q],$ where the square bracket 
   denotes \te\ equivalence class and $q$ is an integrable holomorphic quadratic differential on  $R.$  Also assume $df_0(1)=V$ where 
   $$\frac{\partial V}{\partial \overline{z}}= |q|/q.$$
    Equality in (\ref{falseassumption}) implies the existence of a holomorphic function $\tilde{g}$ 
   defined on a holomorphic covering of $Teich(R)$ mapping onto the unit disc $\Delta$  such that $\tilde{g} \circ f (t) =t,$ (see inequality (\ref{400})). Thus to prove Theorem \ref{maintheorem} it suffices  to show that under the given hypothesis such a function $\tilde{g}$ cannot exist.
   
Section 2 introduces Bers' fiber space $F(Teich(R))$ of $Teich(R)$ which is bianalytically 
equivalent to $Teich(R-p)$ for any point $ p \in R.$ The maps $f$ and $g$ lift to this fiber space giving 
maps $\hat{f}$ and $\hat{g}$ for which  $\hat{g} \circ \hat{f}(t) = t.$ 
See Figure 3 in section 6.

Section 3 briefly describes a natural conformal metric $\lambda_R$ on any Riemann surface  which is induced by Bers' fiber space  over $Teich(R).$ If $\rho$ is the Poincar\'e metric on $R$ then  one has the inequality (\ref{basic}).  We call $\lambda_R(p)|dp|$ the \te\ density because it is induced by the infinitesimal form of \tes\ metric restricted to the fiber over the identity in $Teich(R)$ for the forgetful map $\Psi: Teich(R-p) \rightarrow Teich(R).$ 
We only need  inequality (\ref{basic}) in the case that $R$ is an annulus but  
understanding how it is constructed is key to understanding the proof of Theorem 1. 

Section 4 estimates the decay of  \Car's infinitesimal metric $C_{{\mathcal A}_r}(1)$ where 
${\mathcal A}_r$ is the annulus $1/r<|z|<r$ as $r$ approaches infinity. 

Section 5  focuses on a quasiconformal selfmapping of ${\mathcal A}_r$ which we call $Spin_t.$ 
It spins the point $1$ on the core curve $|z|=1$ of ${\mathcal A}_r$ through a counterclockwise angle of length $t$
while holding fixed all of the boundary points of ${\mathcal A}_r$ and while shearing points along concentric circles. This quasiconformal map induces a new conformal structure on ${\mathcal A}_r$ and in this new conformal structure the annulus is conformal to an annulus of the form $A_{r(t)}$ for some annulus with outer radius $r(t)$ 
and inner radius $1/r(t).$  We give estimates for $r(t)$ from above and below in terms of $r$ and $t.$

Section 6 shows how to prove Theorem 1 in the very special case that the Riemann surface R is just an annulus and the separating simple closed curve is the core curve which separates its two boundary components.  It shows that if $V$ is the tangent vector pointing in the direction of enlarging the modulus of the annulus then
$Car_{Teich(R)}(\tau,V)$ is strictly less than $Kob_{Teich(R)}(\tau,V).$   This result is the model for the more general result in Theorem 1.  In Theorem 1 we apply the same idea to an embedded annulus ${\mathcal A}_r$ which is maximal annulus in its homotopy class.

Section 7 develops the Jenkins-Strebel theory of a maximal separating cylinder on an arbitrary Riemann surface and the associated cylindrical quadratic differential $q.$

Section 8 explains the spinning map as an element in the mapping class group of $R-p.$

Section 9 explains how to create a countable a family $R_n$ of Riemann surfaces induced by $q$ and by unrolling the cylinder along an infinite strip.

Secton 10 explains contradictory estimates of the length of waist curves that are core curves of annuli ${\mathcal A}_{r^n}$ embedded in $R_n.$  The estimates depend on inequality (\ref{basic}).

   Since writing the first version of this paper in 2015 I have learned of a paper \cite{Markovic3} by V. Markovic that proves a related result concerning the ratio of the global Carath\'eodory's and Kobayashi's metrics.

   \end{section}
    
\begin{section}{Teichm\"uller discs}

   \begin{definition}\label{tdisc}
   A \te\ disc is the image ${\mathbb D}(q)$ of any map $$f: \Delta \rightarrow Teich(R)$$ of the form \begin{equation}\label{T300} 
   f(t)=[t|q|/q]
   \end{equation}
    for $|t|<1$ where $q$ is an integrable, holomorphic, non-zero quadratic differential on 
   $R.$   
   \end{definition}
   
   By \tes\ theorem (\ref{T300}) is injective and isometric with respect to Poincar\'e's metric on $\Delta$ 
   and \tes\ metric on $Teich(R).$
    It is interesting to note that any disc isometrically embedded in a finite dimensional \te\ space necessarily has the form $f(t)$ or $f(\overline{t})$ where $f$ has \te\ form, \cite{Antonakoudis}.

    \begin{lemma}\label{ONE}Assume $V$ is a vector  tangent to the embedding of a \te\ disc given by $$f(t) = [t|q|/q]$$ where $q$ is a holomorphic quadratic differential of finite norm and suppose
    also that $C(\tau,V)=K(\tau,V).$ Then
    there exists a unique function $g \in {\mathcal G}({\tau},0)$   for which $(dg)_0(\tau,V)$ is equal to $1$ and  
 for which $(g \circ f)(t)=t$ for all $|t|<1.$ 
 
 Conversely, if there exists such a function $g$ then $C(\tau,V)=K(\tau,V).$
  \end{lemma}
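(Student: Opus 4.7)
The plan is to unpack the equality $C(\tau,V)=K(\tau,V)$ into the existence of a Schwarz-extremal holomorphic retraction of $Teich(R)$ onto the \te\ disc $f(\Delta)$, with the equality case of Schwarz's lemma doing the main work.

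\emph{Setup and converse direction.} By \tes\ theorem the map $f:\Delta\to Teich(R)$, $f(t)=[t|q|/q]$, is an isometric embedding of the Poincar\'e disc into $Teich(R)$ equipped with \tes\ metric, and \tes\ metric coincides with Kobayashi's on $Teich(R)$. Hence, for $V=df_0(1)$, $K_{Teich(R)}(\tau,V)=\rho_\Delta(0,1)=1$, and the hypothesis reduces to $C_{Teich(R)}(\tau,V)=1$. The converse is then immediate: if $g$ exists with the stated properties, then $|dg_\tau(V)|=1$ forces $C(\tau,V)\geq 1$, which combined with the general inequality (\ref{C<K}) yields $C(\tau,V)=K(\tau,V)=1$.

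\emph{Existence of $g$.} For the forward direction I extract the extremal by a normal-families argument. By the definition of $C$, choose $g_n\in{\mathcal G}(\tau)$ with $|dg_n(V)|\to 1$; after multiplying each $g_n$ by a unimodular constant one may assume $dg_n(V)\to 1$. The Bers embedding (invoked already for (\ref{equivCK})) realizes $Teich(R)$ as a bounded domain in a complex Banach space, so the uniformly bounded sequence $\{g_n\}$ admits a locally uniformly convergent subsequence with limit $g:Teich(R)\to\overline{\Delta}$ holomorphic, $g(\tau)=0$, $dg_\tau(V)=1$; the maximum modulus principle confines $g$ to the open disc, so $g\in{\mathcal G}(\tau)$. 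Now the composition $h=g\circ f:\Delta\to\Delta$ satisfies $h(0)=0$ and $h'(0)=dg_\tau(V)=1$, and the equality case of Schwarz's lemma forces $h(t)=t$ for all $|t|<1$.

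\emph{Uniqueness and main obstacle.} Suppose $g_1$ and $g_2$ both satisfy the conclusions. Then $H=(g_1-g_2)/2$ is bounded holomorphic on $Teich(R)$, vanishes on the \te\ disc $f(\Delta)$, and has $dH_\tau(V)=0$. I expect this uniqueness step to be the main technical obstacle of the lemma. One would like to conclude $H\equiv 0$ by a Lempert-type complex-geodesic rigidity argument, using that \te\ discs are complex geodesics for Kobayashi's metric: specifically, small real perturbations $g_1+sH$ remain in ${\mathcal G}(\tau)$ on compact subsets (by an open-mapping/maximum-modulus estimate on the distance of $g_1$ from $\partial\Delta$), each having derivative $1$ in the direction $V$; extremality of the Carath\'eodory supremum, together with a boundary-behavior argument on $\Delta$ applied to $g_1+sH$ for $s$ ranging over a small real interval, then forces $H=0$. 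Note that for the application to Theorem \ref{maintheorem} only the existence of $g$ (possibly on a holomorphic cover of $Teich(R)$, as indicated in the introduction) is required to obtain the contradiction.
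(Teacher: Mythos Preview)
Your existence argument (a near-extremal sequence $g_n\in{\mathcal G}(\tau)$, normal families, then the equality case of Schwarz on $g\circ f$) is sound and is in fact half of what the paper does. The paper adds an explicit candidate: it writes down the linear functional
\[
\tilde g(\nu)=\int\!\!\int_R \frac{|q|}{q}\,\nu\,dx\,dy
\]
on the unit ball of Beltrami coefficients (with $\|q\|=1$), argues that via the Ahlfors--Weill sections this induces a holomorphic map $g:Teich(R)\to\Delta$, and then applies Schwarz just as you do. The explicit formula is the point of the lemma for the rest of the paper: it is this $\tilde g$ (or rather its lift to a cover, as the introduction notes around (\ref{400})) that is used in Sections 6 and 10. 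Your abstract limit gives \emph{some} $g$, but not this one, so you lose the concrete identity $\tilde g(t|q|/q)=t$ which drives the later length estimates.

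On uniqueness, your instinct that it is the delicate point is correct, and your perturbation sketch is not complete. The paper's argument for uniqueness is also brief: it asserts that the explicit $g$ coming from $\tilde g$ must coincide with any normal-family limit, essentially because both satisfy $g\circ f(t)=t$; this pins $g$ down on the \te\ disc but does not, on its face, determine $g$ off it. For the application to Theorem~\ref{maintheorem}, however, only the existence (and the explicit form) is used, as you observe.

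One technical caution: your normal-families step invokes Montel on the Bers embedding. In the finite-type case relevant to Theorem~\ref{maintheorem} this is fine since the embedding lands in a finite-dimensional space; in the infinite-dimensional setting you would need to say more, since bounded sets in a Banach space are not relatively compact and Arzel\`a--Ascoli does not apply directly.
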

 \begin{proof} Let $M(R)$ be the open unit ball of bounded Beltrami coefficients $\nu$ on $R$  with norm $$|\nu||_{\infty} = \sup_{z \in R} |\nu(z)|$$
 and let \begin{equation}\label{400}\tilde{g}(\nu)= \int \! \int_R \frac{|q|}{q}\nu dx dy .\end{equation}.
 
 Obviously $\tilde{g}$ is complex linear and therefore holomorphic on $M.$  Because the complex structure
 $Teich(R)$  is induced by local Ahlfors-Weill cross sections, $\tilde{g}$ descends to a global holomorphic function $g$ that maps the Bers' embedding of $Teich(R)$ into the unit disc.  
  By the chain rule and Schwarz's lemma
 $$(dg)_{\tau} \circ (df)_{0}(1)\leq 1.$$
 Also by Schwarz's lemma, if this is an equality then, after post composing $g$ be a rotation, we obtain  $g \circ f (t)=t$ for $|t|<1.$  Since $q$ has finite norm, it is uniquely infinitesimally extremal.  From the hypothesis that $C(\tau,V)=K(\tau,V)$ there exists a family of holomorphic functions 
 $g_n \in {\mathcal G}(\tau)$ such that $C(\tau,V)=K(\tau,V) - 1/n.$  Then the functions  $g_n \circ f$ defined on $|t|<1$ are a normal family and have a limit $g \circ f$  defined on the image of $f$ for which  
 $g \circ f (t)=t.$  Since the function $g$ determined by  $\tilde{g}$ given in (\ref{400}) is one such function it must be equal to that limit. 
 
 The reverse implication follows from the definitions (\ref{K}) and (\ref{C111}) of the infinitesimal forms $C(\tau,V)$ and $K(\tau,V).$ 
 \end{proof}

      \hspace{.1in}\end{section}
           
\begin{section}{Bers fiber space}\label{Bers}

Over every Teichm\"uller space $Teich(S)$ of a quasiconformal surface $S$ there is a canonical fiber space 
 called the Bers fiber space \cite{Bers5}.   
$$ \Psi:F(Teich(S)) \rightarrow Teich(S),$$ 
which (except in the case when $S$ is a torus) has dimension one more than the dimension of $Teich(S).$

To describe $F(Teich(S))$ we review the definition of $Teich(S).$ Assume $S$ is a fixed quasiconformal surface in the sense that $S$ is a topological Hausdorff space equipped with a system of quasiconformal charts  and assume also that this quasiconformal structure has at least one conformal substructure.  This means that among the quasiconformal charts that define $S,$ there is a subset of charts $\{z_j\}$ that satisfy  $z_j \circ (z_k)^{-1}$ is conformal for every $z_j$ and $z_k$ in the subset.
   
   The \te\ equivalence relation makes quasiconformal maps$f_0$ and $f_1(S)$ to Riemann surfaces $f_0(S)$ and $f_1(S) $ equivalent if there is a conformal map $c:f_0(S) \rightarrow f_1(S)$ and an isotopy through quasiconformal maps $h_t$ such that $h_0=f_0,$ $h_1 = c \circ h_1$ and $h_t(p)=f_0(p)=c \circ f_1(p)$ for all points in the ideal boundary of $S$ and all real numbers $t$ with $0 \leq t \leq 1.$

\begin{figure}[htp]~\label{figure0}
\hspace{.2in}\xymatrix{
& f_1(S) \ar[d]^{c}    
\\
S \ar[r]_{f_0} \ar[ru]^{f_1} & f_0(S)  }
\caption{Equivalence of $f_0$ and $f_1$}
\end{figure}

To define the Bers fibration $\Psi: F(Teich(S)) \rightarrow Teich(S)$ one picks an arbitrary point $p$ in $S$ and considers quasiconformal maps $f$ to variable Riemann surfaces $f(S).$ Since $f$ is a quasiconformal homeomorphism that maps to a Riemann surface $f(S),$ it restricts to a map 
$$f: (S-p) \rightarrow (f(S)-f(p)).$$  The equivalence relation for $F(Teich(S))$  has a similar description: two maps $f_0$ and $f_1$ from $S-p$ to $f_0(S-p)$ and  to $f_1(S-p)$ are equivalent if
there  is an isotopy $h_t$ connecting $f_0$ to $c \circ f_1$ as described above.  But now since $f(p)$ is a boundary point of  $f(S)-f(p)$ the isotopy $h_t$ must also pin down  the point $h_t(p).$  Thus the isotopy $h_t$ connecting 
$f_0$ to $c \circ f_1$ keeps track of the variable conformal structure of the marked Riemann surface $f_t(S)$  as well as the movement of the point $f_t(p)$ within that surface. 

From the above discussion we see that for an equivalence class $[[f]]$ in $F(Teich(S))$  the map $$F(Teich(S)) \ni [[f]] \mapsto f(p) \in f(S)$$ is well defined. We denote this map by $Ev$ and call it the evaluation map.  
Since $F(Teich(S))$ is simply connected $Ev$
   lifts to a map $\widetilde{Ev}$ that maps $F(Teich(S))$ into the universal cover of the Riemann surface $f(S).$

Since by definition a representative $f$ of a class $[[f]] \in Teich(R-p)$ is quasiconformal on $S-p,$ it extends uniquely to a quasiconformal map defined on $S.$  Moreover two such representatives that are isotopic through quasiconformal maps on $S-p$ are also isotopic on $S.$ Therefore the covering
 $$\Psi:F(Teich(S)) \rightarrow Teich(S),$$
   which forgets that the isotopy must fix the point at $f_t(p)$  is well-defined.  In other words, the map $$\Psi: [[f]] \mapsto [f]$$ 
   is well defined.

In general $F(Teich(S))$ is a complex manifold and for each point $\tau \in Teich(S)$ we let 
\begin{equation}\label{Ktau}
{\mathbb K}_{\tau}=\Psi^{-1}(\tau).
\end{equation}
${\mathbb K}_{\tau}$ is a one-dimensional properly embedded submanifold of $F(Teich(S))$ which is conformal to a disc.  Bers explicitly describes $F(Teich(S))$ as a moving family of normalized quasicircles.  One side of each of these quasicircles determines a point in $Teich(S)$ while the other side realizes the fiber ${\mathbb K}_{\tau}$ as a domain conformal to a disc.  The disc on one side is properly and holomorphically embedded in $F(Teich(S)).$  

Now let $R$ be the manifold $S$ for which the $\{z_j\}$ are its charts. Then denote by $\dot{R}$ the Riemann surface $R-p$ from which the point $p$ has been excised. Bers shows that the fiber space $F(Teich(S))$ with fibers equal to the fibers of  $\Psi$ is isomorphic to the fibration 
$$\Psi_p: Teich(\dot{R}) \rightarrow Teich(R),$$
where $\dot{R}=R-p.$ 
It has maximal rank at each point of its domain because
  the derivative $d\Psi_p$  of $\Psi_p$ is dual to the inclusion map of the integrable holomorphic quadratic differentials on $R$ into the integrable quadratic differentials holomorphic on $R-p.$   Except in the case when $R$ is a torus, at any point $p$ on $R$ there exists an integrable holomorphic quadratic differential $R$ with a simple pole at $p.$ This  shows that $\Psi$ has a surjective derivative at every point $\dot{\tau} \in F(Teich(R))$ except when $R$ is a torus.

        Bers gives a global chart for $Teich(\dot{R})$ by representing it as a slice in the quasi-Fuchsian 
        space of a Fuchsian covering group $G$ of $R.$       He forms the 
        quasi-Fuchsian groups $$w^{\mu} \circ G \circ (w^{\mu})^{-1}$$ 
        where $\mu$ is a Beltrami coefficient compatible $G$ in the exterior of $\Delta$ and is identically equal to zero in its interior.
        The quotient of the action of $G$ on the interior represents $R$ and the quotient by its action on the exterior represents $R_{\tau}$ where $\tau= \Psi(\dot{\tau}).$         
                       
                     In the following lemma we assume $V$ is a tangent vector to $Teich(R)$ of the form 
           $\overline{\partial} V = |q|/q,$ where $q$ is a quadratic differential form supported in the complement of $\Delta$ and where $\overline{\partial} V = 0$ in  $\Delta.$  Note that if $q$ is integrable and holomorphic on $R$ then it is also integrable and holomorphic on $\dot{R}.$  Without changing the notation we  view $V$ as  representing a vector tangent to $F(Teich(R))$ and at the same time a vector tangent to $Teich(R).$
           \begin{lemma}\label{lemma2} Suppose $\tau \in Teich(R-p)$ and 
 $\overline{\partial}V =|q|/q$ where $q$ is a holomorphc quadratic differential on $R.$ In the setting just described suppose $$C_{Teich(R)}(\Psi(\tau),V)=K_{Teich(R)}(\Psi(\tau),V).$$
           Then $C_{F(Teich(R))}(\tau,V)=K_{F(Teich(R))}(\tau,V).$
           \end{lemma}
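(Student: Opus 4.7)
The plan is to pull the Carath\'eodory extremal function on $Teich(R)$ back to $F(Teich(R))$ through the Bers projection $\Psi$, and then invoke the converse half of Lemma \ref{ONE}. Since $V$ satisfies $\overline{\partial}V = |q|/q$ for an integrable holomorphic quadratic differential $q$ on $R$, the hypothesis $C_{Teich(R)}(\Psi(\tau), V) = K_{Teich(R)}(\Psi(\tau), V)$ together with Lemma \ref{ONE} produces a holomorphic function $g:Teich(R) \to \Delta$ with $g(\Psi(\tau)) = 0$ and $g\circ f(t) = t$ for all $|t|<1$, where $f(t) = [t|q|/q]_R$ is the Teichm\"uller disc determined by $q$ on $R$.

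Next I would observe that $q$, being holomorphic and integrable on $R$, is automatically holomorphic and integrable on $\dot{R} = R - p$, since excising a single point neither introduces singularities nor affects the $L^1$ norm. The same family of Beltrami coefficients $t|q|/q$ therefore describes a Teichm\"uller disc $\hat{f}:\Delta \to Teich(\dot{R}) \cong F(Teich(R))$ by $\hat{f}(t) = [t|q|/q]_{\dot{R}}$. Because $\Psi$ is the forgetful fibration, which acts as the identity at the level of representative Beltrami coefficients while dropping the marking of $p$, one has the commutation
\begin{equation*}
\Psi \circ \hat{f} = f, \qquad d\Psi_\tau(V) = V,
\end{equation*}
under the identification of $V$ as a vector tangent to both $F(Teich(R))$ and $Teich(R)$.

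Then I would set $\tilde{g} := g \circ \Psi : F(Teich(R)) \to \Delta$. This is holomorphic as a composition of holomorphic maps, vanishes at $\tau$, and satisfies
\begin{equation*}
\tilde{g} \circ \hat{f}(t) \;=\; g \circ \Psi \circ \hat{f}(t) \;=\; g \circ f(t) \;=\; t \qquad \text{for } |t|<1.
\end{equation*}
Since $V = d\hat{f}_0(1)$ by construction of $\hat{f}$, the pair $(\hat{f}, \tilde{g})$ is precisely the configuration required by the converse direction of Lemma \ref{ONE} applied on $F(Teich(R))$. That converse then yields $C_{F(Teich(R))}(\tau, V) = K_{F(Teich(R))}(\tau, V)$, which is the desired conclusion.

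The only real obstacle is justifying the commutation $\Psi \circ \hat{f} = f$, together with $d\Psi_\tau V = V$. These rest on the identification $F(Teich(R)) \cong Teich(R-p)$ and on the fact, stated earlier in the section, that the codifferential of $\Psi_p$ is the inclusion of integrable holomorphic quadratic differentials on $R$ into those on $\dot R$; once this is recorded, the rest of the argument is a one-line composition, and no quantitative estimate is needed.
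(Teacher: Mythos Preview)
Your proposal is correct and follows essentially the same route as the paper: lift the Teichm\"uller disc $f(t)=[t|q|/q]$ to $\hat f(t)=[[t|q|/q]]$ in $F(Teich(R))\cong Teich(R-p)$, pull back the extremal $g$ from Lemma~\ref{ONE} to $\hat g=g\circ\Psi$, and verify $\hat g\circ\hat f(t)=t$. Your version is simply more explicit about why $q$ remains integrable and holomorphic on $R-p$ and about the commutation $\Psi\circ\hat f=f$, which the paper leaves implicit.
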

\begin{proof} The \te\ disc $f(\Delta) =\{[t \frac{|q|}{q}]: |t|<1\} \subset Teich(R)$
lifts to a \te\ disc $\{[[t \frac{|q|}{q}]]: |t|<1\}$ in $Teich(\dot{R}). $  If we denote the corresponding lifted mapping by $\hat{f}$ and let $\hat{g}= g \circ \Psi$ 
where $\Psi$ is the projection from 
Bers fiber space $F(Teich(R))$ to  $Teich(R)$ and where $g$ is  referred to in Lemma \ref{ONE},
we obtain
$$\hat{f}: \D \rightarrow F(Teich(R)) {\rm \ and \ }  \hat{g}: F(Teich(R)) \rightarrow \D$$  where both $\hat{f}$ and $\hat{g}$ are
holomorphic  and \begin{equation}\label{hats} \hat{g} \circ \hat{f}(t) =t.\end{equation}
The conclusion follows.
\end{proof}

  \end{section}

  \begin{section}{Comparing metrics on surfaces}
  For every $\tau \in Teich(R)$ the fiber ${\mathbb K}_{\tau}=\Psi^{-1}(\tau)$ of the forgetful map,
   \begin{equation}\label{forget2} \Psi:F(Teich(R)) \rightarrow Teich(R),
   \end{equation}
   carries two metrics that are induced by  inherent geometry.   The first is the Poincar\'e metric $\rho$
   defined in (\ref{one}).  It is natural because ${\mathbb K}_{\tau}$ is conformal to a disc.   When viewed as 
   a metric on ${\mathbb K}_{\tau}$ we denote it 
   by $\rho_{\tau}$ and when viewed as a metric on $R$ we denote it by $\rho_R.$
      The second metric is the restriction of the  infinitesimal form  of \tes\ metric on $F(Teich(R))$ to ${\mathbb K}_{\tau}.$  Similarly, when viewed as a metric on ${\mathbb K}_{\tau}$ we denote it by $\lambda_{\tau},$ and when viewed as a metric on $R=R_{\tau}$ we denote it by $\lambda_{R}.$  In \cite{GardinerLakic3} we have called $\lambda_{R}$ the \te\ density.
     
   \begin{definition}\label{tdensity} $\lambda_{\tau}(z)|dz|$ is the pull-back by a conformal map $c:\Delta \rightarrow {\mathbb K}_{\tau}$  of the infinitesimal form of  \tes\ metric on $F(Teich(R))$
 restricted to    ${\mathbb K}_{\tau}.$   \end{definition}
 
   From the previous section we know that $F(Teich(R))$ has a global coordinate given by 
   $Teich(R-p),$  
   where $R$ is a marked Riemann surface and $p$ is a point on $R.$
   For any point $[[g]] \in Teich(R-p)$ the evaluation map $Ev$ is defined by $Ev([[g]])=g(p)$ is holomorphic 
   and, since the domain of $Ev$ is simply connected, $Ev$ lifts to $\Delta$ by the covering map $\Pi_p$ to $\widetilde{Ev}$ mapping $Teich(R-p)$ to 
   $\Delta.$  Note that the target of the map $Ev$ is the Riemann surface $R_{\tau}$ and the target of $\widetilde{Ev}$ is the universal covering of $R_{\tau}.$
For this reason we can interpret the inequality of the next theorem as an inequality of conformal metrics on $R=R_{\tau}.$
Also, if a quasiconformal map $f$ represents an equivalence class in $Teich(R_{\tau}-p)$  
and if $f$ represents a point in ${\mathbb K}_{tau}$ defined by (\ref{Ktau}), then $f(p)$ is a point in $R_{\tau}$ since in that case $f(R_{\tau})=R_{tau}.$

The following theorem is proved in \cite{GardinerLakic3}.
   
\begin{theorem}\label{thm2} For any surface $R$ with marked conformal structure $\tau$ whose universal covering is conformal to $\Delta,$  the conformal metrics $\rho_{\tau}$ and $\lambda_{\tau}$ satisfy
\begin{equation}\label{equthm2}(1/2) \rho_{\tau}(p)|dp| \leq \lambda_{\tau}(p)|dp| \leq \rho_{\tau}(p)|dp|.\end{equation}
\end{theorem}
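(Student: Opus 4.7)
The plan is to prove the two inequalities by separate arguments, relying crucially on the identification of Kobayashi's and Teichm\"uller's metrics on $F(Teich(R))=Teich(R-p)$ recalled in the introduction.

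For the upper bound $\lambda_\tau \leq \rho_\tau$, the argument is essentially formal. Pick a biholomorphism $c:\Delta \to \mathbb{K}_\tau$; composed with the holomorphic inclusion $\mathbb{K}_\tau \hookrightarrow F(Teich(R))$, this is a holomorphic map $\Delta \to F(Teich(R))$. Kobayashi's distance-decreasing property then gives $K_{F(Teich(R))}(c(z),dc_z(w)) \leq K_\Delta(z,w) = \rho_\Delta(z,w)$. Since Kobayashi's metric coincides with Teichm\"uller's on $F(Teich(R))$, pulling this back by $c$ and reinterpreting yields exactly $\lambda_\tau \leq \rho_\tau$ on $\mathbb{K}_\tau$.

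For the lower bound $(1/2)\rho_\tau \leq \lambda_\tau$, I would use the inequality $C_{F(Teich(R))} \leq K_{F(Teich(R))} = \lambda_\tau$ restricted to the fiber, and exhibit enough Carath\'eodory test functions. Given a fiber tangent vector $V$ at $\tau$, which infinitesimally moves the marked point in the direction of some tangent vector at $p\in R_\tau$, I would construct the test function by imitating the recipe of formula (\ref{400}): choose an integrable holomorphic quadratic differential $q$ on $R-p$ with a simple pole at $p$ whose residue captures the direction of $V$, and set $\tilde h(\nu) = \int\!\int_{R-p} (|q|/q)\,\nu\,dx\,dy$ on Beltrami coefficients. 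This is complex-linear, descends via local Ahlfors--Weill cross-sections to a holomorphic function $h:F(Teich(R)) \to \Delta$ (after normalization), and its derivative in the direction $V$ can be evaluated using a Beltrami representative of $V$ of the form $\bar\partial(\chi \xi)$, where $\xi$ is a local holomorphic vector field realizing the motion of $p$ and $\chi$ is a cut-off. A residue computation at the simple pole of $q$ then expresses $|dh(V)|$ in terms of the hyperbolic norm of the displacement vector at $p$; optimizing over admissible $q$ produces the inequality $|dh(V)| \geq (1/2)\rho_\tau(V)$, giving $C_{F(Teich(R))}(\tau,V) \geq (1/2)\rho_\tau(V)$ and hence $\lambda_\tau \geq (1/2)\rho_\tau$.

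The hard part will be pinning down the constant $1/2$ and confirming that the function $h$ is genuinely bounded by $1$ in modulus. The upper bound comes for free from general Kobayashi formalism, but the lower bound requires a careful geometric identification: the factor of $2$ discrepancy comes from the relationship between the Poincar\'e metric $|dp|/(1-|p|^2)$ on the universal cover $\tilde R_\tau \cong \Delta$ and the residue-based pairing of a quadratic differential with a simple pole against the Beltrami representative of a point-moving tangent vector, together with the fact that Teichm\"uller's infinitesimal norm $\|\mu\|_T = \sup_{\|\phi\|_1 \leq 1} |\int\!\int \mu\phi|$ is normalized to agree with half the logarithm of dilatation in the global form. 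I would verify the constant by a direct check on the model case $R \cong \Delta$ (or on a thrice-punctured sphere obtained by deleting $p$ from a once-punctured disc), where all quantities can be computed explicitly, and then transfer the estimate to general $R$ by passing to the universal cover and using invariance of both densities under the covering group.
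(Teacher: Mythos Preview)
The paper does not actually prove Theorem~\ref{thm2}: it simply cites \cite{GardinerLakic3}, and the companion residue formula for $\lambda_\tau$ stated in Appendix~D (Theorem~\ref{thm3}) is likewise only cited. So there is no in-paper argument to compare against; what one can compare is your sketch with the machinery the paper imports from those references.

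Your upper bound is correct and is the standard Schwarz--Kobayashi argument. Your lower bound is in the right spirit---pairing fiber tangent vectors against integrable quadratic differentials on $R-p$ with a simple pole at $p$ is exactly how \cite{GardinerLakic1,GardinerLakic3} characterize $\lambda_\tau$, and Appendix~D records the resulting identity $\lambda_\tau(p)|dp|=\sup_{\|\varphi\|\le1}|\pi\cdot\mathrm{res}_p(\varphi)|$. However, routing the lower bound through $C_{F(Teich(R))}$ is an unnecessary detour: the Teichm\"uller infinitesimal norm on $F(Teich(R))$ is \emph{already} $\sup_{\|\varphi\|\le1}\big|\iint\mu\varphi\big|$, so the residue formula drops out directly without invoking Carath\'eodory test functions at all. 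Once one has the residue formula, the inequality $(1/2)\rho_\tau\le\lambda_\tau$ becomes the concrete analytic statement that for every $p\in R$ there exists an integrable holomorphic $\varphi$ on $R-p$ with $\|\varphi\|\le1$ and $|\pi\,\mathrm{res}_p(\varphi)|\ge(1/2)\rho_R(p)$. Your proposal to ``verify the constant on a model case and transfer by the universal cover'' is not quite enough: $\lambda_\tau$ is \emph{not} the lift of a single intrinsic density from $\Delta$ (it genuinely depends on the quotient $R$), so you cannot simply compute on $\Delta$ and push down. The honest work in \cite{GardinerLakic3} is to construct such a $\varphi$ on $R-p$ itself (or equivalently to bound the extremal problem $\inf\|V_{\bar z}\|_\infty$ from above by $2/\rho_R(p)$), and that construction---not a covering argument---is what fixes the constant $1/2$.
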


This theorem has a companion theorem that expresses $\lambda_{\tau}$ in terms residues of quadratic quadratic differentials holomorphic on $R-p$  that have integral norm less than or equal to $1.$  For the statement see  Appendix D. 
In the proof of the main theorem of this paper, that is, Theorem \ref{maintheorem}, we use Theorem  \ref{thm2} only for the case  $R$ is an annular domain conformal to  ${\mathcal A}_r = \{z : 1/r < |z| <r\}.$  
\end{section}
        
\begin{section}{The $C$ and $K$ metrics on an annulus}

Let $C_r$ and $K_r$  be  the infinitesimal forms of  \Car's and Kobayashi's metrics on  the annulus $${\mathcal A}={\mathcal A}_r=\{w: (1/r) < |w| <r\}.$$ We identify the tangent space of the plane domain ${\mathcal A}_r$ with direct product ${\mathcal A}_r \times {\mathbb C}. $ Since ${\mathcal A}_r$ is acted on by the continuous group of rotations the value of either of these forms is invariant along the core curve  $\{w: |w|=1\}$ and we focus only on the values of  $K_{{\mathcal A}_r}(1,1)$ and $C_{{\mathcal A}_r}(1,1),$
which are defined by the general formulas (\ref{K}) and (\ref{C111}) in the introduction.
Also, we shorten these notations to $C_{r}(1)$ and $K_{r}(1).$
 
     The logarithm provides a conformal map from ${\mathcal A}_r$ to a cylinder which we can think of as being vertical with the circumference equal to $2 \pi$ and height equal to $2\log r.$ With the same circumference it becomes taller as $r$ becomes larger. 
      In the sequel the only results we need from this section are inequality (\ref{FOUR})  in the following lemma  and formula (\ref{K11}).
      
\begin{lemma}\label{Car'smetric} Assume $r>1$ and $n$ is a positive integer.  Then 
\begin{equation}\label{FOUR}C_{r^n}(1) \leq \frac{1}{n}  \cdot \frac{r}{r-1}.
\end{equation}
\end{lemma}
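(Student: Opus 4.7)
The plan is to exploit the $n$-fold covering map $\phi(z)=z^n$ from $\mathcal{A}_r$ onto $\mathcal{A}_{r^n}$, combined with Schwarz's lemma applied to a Euclidean disc inscribed in $\mathcal{A}_r$ at the point $1$.

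First, I would observe that $\phi(z)=z^n$ is holomorphic from $\mathcal{A}_r$ into $\mathcal{A}_{r^n}$ with $\phi(1)=1$ and $\phi'(1)=n$. For any $g \in \mathcal{G}(1)$ on $\mathcal{A}_{r^n}$, the composition $g\circ\phi$ lies in $\mathcal{G}(1)$ on $\mathcal{A}_r$, and the chain rule gives $(g\circ\phi)'(1) = n\, g'(1)$. Taking the supremum over $g$ yields the contraction estimate
\begin{equation*}
n\cdot C_{r^n}(1) \ \leq\ C_r(1).
\end{equation*}

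Second, I would bound $C_r(1)$ by the radius of the largest Euclidean disc centered at $1$ that fits in $\mathcal{A}_r$. The distance from $1$ to the inner boundary $|w|=1/r$ is $1-1/r=(r-1)/r$, and to the outer boundary $|w|=r$ is $r-1$; the smaller of these is $(r-1)/r$, so the disc $D\!\bigl(1,(r-1)/r\bigr)$ is contained in $\mathcal{A}_r$. For any $g\in\mathcal{G}(1)$ on $\mathcal{A}_r$, restricting $g$ to this disc and applying Schwarz's lemma at $1$ gives
\begin{equation*}
|g'(1)| \ \leq\ \frac{1}{(r-1)/r} \ =\ \frac{r}{r-1},
\end{equation*}
and taking the supremum produces $C_r(1)\leq r/(r-1)$. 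Chaining the two inequalities gives (\ref{FOUR}).

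There is really no serious obstacle here: the only point that requires a moment of care is the elementary geometric check that the relevant inscribed Euclidean disc has radius $(r-1)/r$ rather than $r-1$, and that Schwarz on this disc — rather than, say, a hyperbolic estimate — already suffices to produce the stated bound. The factor $1/n$ is generated entirely by the contraction property of $\phi(z)=z^n$, which is the natural holomorphic map bridging the two annuli and is ideally matched to the Carathéodory side of the inequality.
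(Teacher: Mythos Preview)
Your proof is correct and follows essentially the same route as the paper: the paper composes an extremal $\tilde g$ for $C_{r^n}$ with $z\mapsto z^n$ to get a competitor on $\mathcal{A}_r$ and then applies Schwarz's lemma on the Euclidean disc of radius $1-1/r=(r-1)/r$ about $1$, which is exactly your two-step argument phrased slightly differently. Your version is marginally cleaner in that you work directly with the supremum rather than assuming it is attained.
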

\begin{proof} 

We assume the supremum in (\ref{C111}) for the tangent vector $1$ at the point $1$ in ${\mathcal A}_{r^n}$ is realized by $|\tilde{g}'(1)|$ where $\tilde{g}(1)=0$ and $\tilde{g}$ is a holomorphic function with domain ${\mathcal A}_{r^n}$ and range contained in the unit disc $\Delta.$
If we put $g(z) = \tilde{g}(z^n),$ then $g$ is  holomorphic and defined on ${\mathcal A}_r$ with $g(1)=0$ and has the same range as $\tilde{g}.$  Since $C_{{\mathcal A}_{r}}(1)$ is the supremum of the values of $|g'(1)|$ for all such functions $g$, we have
\begin{equation}\label{180}C_{{\mathcal A}_{r^n}}(1) = |\tilde{g}'(1)| = (1/n) \cdot |g'(1)|  \leq (1/n) \cdot C_r(1).
\end{equation}

Any holomorphic function $g$ mapping ${\mathcal A}_r$ into the unit disc with $g(1)=0$ restricts to a map of a disc of radius $1-1/r$ centered at the point $1.$  By Schwarz's lemma $|g'(1)|$ satisfies 
$$ |g'(1)| < \frac{1}{1-1/r} = r/(r-1).$$
This proves the lemma.
\end{proof}

It is important to compare the rates of decay of $C_r$ and $K_r.$ 
Because the formula for $K_r(1)$ is much more simple, this comparison is easy to see with the benefit of the previous lemma. Using the logarithmic coordinate $\zeta = - i \log w$ for ${\mathcal A}_r$, the core curve $z=e^{i\theta}$ is covered by the real axis $\eta=0$ and  ${\mathcal A}_r$ is conformal to the rectangle 
$$ \{ \zeta = \xi + i \eta: \log(1/r) < \eta < \log r, \ \ -\pi \leq \xi < \pi \},$$
with the two sides at $\xi = -\pi$ and $\xi = \pi$ identified by the translation 
$\zeta \mapsto \zeta +2 \pi.$ Then the Kobayashi (Poincar\'e) metric is 
\begin{equation}\label{K1}K_r(\zeta,V) =\frac{\pi|d\eta(V)|}{(2\log r)\cos (\frac{\eta \pi}{2\log r}) }.
\end{equation} 
In the $\zeta$-coordinate along the core curve $\eta=0$ where the cosine is equal to $1$ we have
\begin{equation}\label{K11}
K_r(1)=\frac{\pi}{2 \log r}.
\end{equation}

Although it is unnecessary for the proof of  Theorem \ref{maintheorem}, the following explicit formula is given by Simha \cite{Simha} 
\begin{equation}\label{C1}
C_r(w)(V)|dw(V)| = \frac{2}{r} \cdot
\frac{\Pi_1^{\infty}(1+r^{-4n})^2(1-r^{-4n})^2}{\Pi_1^{\infty}(1+r^{-4n+2})^2(1-r^{-4n+2})^2}|dw(V)|.
\end{equation}

Of course from inequality (\ref{C<K}) 
 for any tangent vector $V$ and any point $P$ in any complex manifold $M,$
 the ratio $C_M(P,V)/K_M(P,V) \leq 1.$
From  formulas (\ref{K11}) and (\ref{C1}) when $M$ is the annulus ${\mathcal A}_r$ we have the more precise formula valid for any point $P$ along the core curve, namely, 
the ratio \begin{equation}\label{pureratio}
C_r(|w|=1,V)/K_r(|w|=1,V) = \frac{4}{\pi} \cdot \frac{\log r}{ r} \cdot 
\frac{\Pi_1^{\infty}(1+r^{-4n})^2(1-r^{-4n})^2}
{\Pi_1^{\infty}(1+r^{-4n+2})^2(1-r^{-4n+2})^2}.
\end{equation}
    In particular, the graph of this ratio is shown in Figure 2 and was provided to me by Patrick Hooper and Sean Cleary.  It shows that the ratio is always less than $1$ and  monotonically decreases to $0$ as  $r$ increases to  $\infty.$   
\begin{figure}[htp]~\label{figure3}    
\centering{
\includegraphics[scale=0.6]{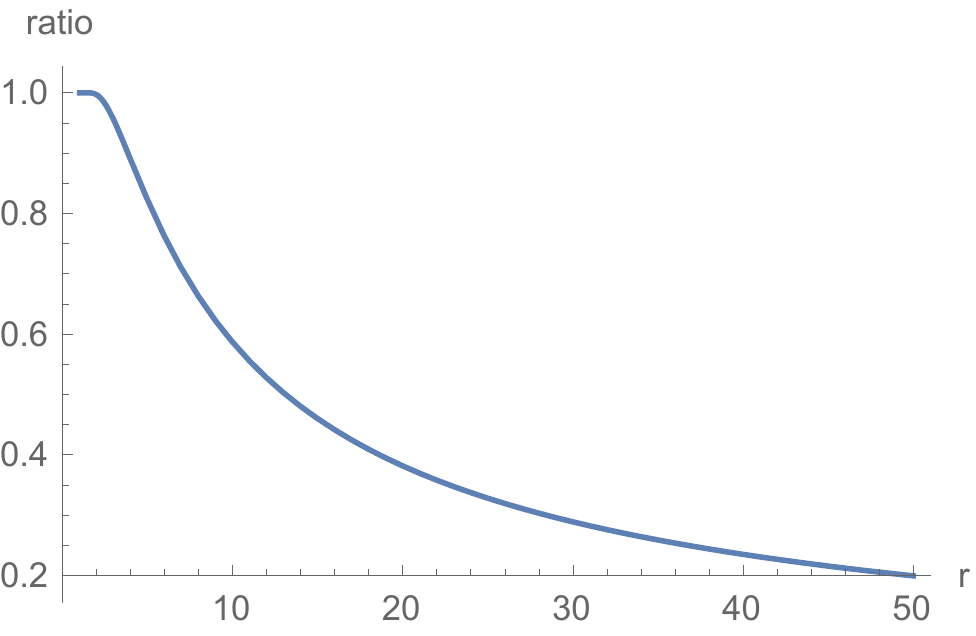}}
\caption{The metric ratio}\end{figure}

 \end{section}





\begin{section}{Spinning an annulus}\label{s5}

We  are interested in a particular selfmap $Spin_t$ of the annulus 
$${\mathcal A}_r=\{z: 1/r<|z|<r\}.$$ $Spin_t$ spins counterclockwise the point  $p=1$ on the unit circle
$$\{z: |z|=1\} \subset {\mathcal A}_r$$ to the point $p=e^{i t}$ for real numbers $t$ while keeping points on the boundary of ${\mathcal A}_r$ fixed.   The formula for $Spin_t$ is easier to write 
in the rectangular coordinate $\zeta$ where we put 
$$\zeta = \xi + i \eta  = -i \log z.$$
We call $\zeta$ rectangular because the strip 
\begin{equation}\label{strip1}
\{\zeta=\xi+i\eta:-\log r \leq \eta \leq +\log r\}
\end{equation}
 factored by the translation $\zeta \mapsto \zeta + 2 \pi$ is a conformal realization ${\mathcal A}_r$ with a fundamental domain that is the rectangle 
$$\{\zeta:0 \leq \xi \leq 2 \pi {\rm \ and  } -\log r \leq \eta \leq +\log r  \}.$$
The covering map is $\Pi(\zeta)=z=e^{-i\zeta}$  and the points $\zeta=2 \pi n$ for integers $n$ cover the point $z=1.$
In the $\zeta$ coordinate we make the following definition.

  \begin{definition}\label{spin}    For $\zeta = \xi +i \eta$ in the strip $-\log r \leq \eta \leq \log r,$ let 
  \begin{equation}\label{inv}Spin_t(\zeta) = \xi + t \left(1-\frac{|\eta|}{\log r}\right)+ i \eta.
  \end{equation} 
  \end{definition}

Note that  $Spin_t$ shears points with real coordinate $\xi$ and with imaginary coordinate $\pm \eta$ along horizontal lines to points with real coordinate $\xi + t(1-|\eta|/\log r)$ while fixing points on the boundary of the strip where $\eta=\pm \log r.$


We let $\mu_t$ denote 
the Beltrami coefficient of $Spin_t$
and let ${\mathcal A}_r(\mu_t)$ be the annulus ${\mathcal A}_r$ with conformal structure determined by 
 $\mu_t.$ 
By the uniformization theorem applied to the annulus ${\mathcal A}_r(\mu_t)$ we know that there is a positive number $r(t)$ such that ${\mathcal A}_r(\mu_t)$ is conformal to $A_{r(t)}.$

\begin{theorem}\label{thm4} Assume $t>2 \pi.$ Then the unique real number $r(t)$ with the property that ${\mathcal A}_r(\mu_t)$ is conformal to $A_{r(t)}$ satisfies 
    \begin{equation}\label{250}
    t \cdot \frac{(1-2\pi/t)^2+ ((\log r)/t)^2}{(\sqrt{1+ ((\log r)/t)^2})} \leq \frac{\log r(t)}{\log r} 
   \leq 
   t \cdot \left(\frac{1}{t^2} + \frac{1}{(\log r)^2}\right)^{1/2}.
   \end{equation}

\end{theorem}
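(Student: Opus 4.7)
The plan is to exploit the explicit piecewise affine structure of $Spin_t$ to compute $\mu_t$, then use length-area (Rengel) estimates to bound the modulus of ${\mathcal A}_r(\mu_t)$ on both sides. Writing $L = \log r$, the first step is a direct differentiation in $\zeta = \xi + i\eta$: since $|\eta|$ is smooth away from $\eta=0$, one obtains
\[
\mu_t|_{\eta>0} = \frac{-it}{2L+it}, \qquad \mu_t|_{\eta<0} = \overline{\mu_t|_{\eta>0}},
\]
so that $|\mu_t| = t/\sqrt{4L^2 + t^2}$ is constant on ${\mathcal A}_r$ and the dilatation of $Spin_t$ is $K_t = (1+|\mu_t|)/(1-|\mu_t|)$.

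For the upper bound $\log r(t)/L \leq t\sqrt{1/t^2 + 1/L^2}$, I would construct an explicit piecewise affine competitor. Because $\mu_t$ is piecewise constant, the qc solution of the Beltrami equation is affine on each half-strip; matching the two halves along the core $\eta=0$ while respecting the periodicity $\xi \mapsto \xi + 2\pi$ forces the target to be a standard annulus of height $\sqrt{L^2 + t^2}$. The Pythagorean form of the upper bound is thus a combination of the vertical extent $L$ and the horizontal shear $t$, realized by an explicit shear map whose target annulus has the asserted modulus.

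For the lower bound, the factor $(1-2\pi/t)^2$, which vanishes at $t=2\pi$, points to Dehn-twist compression: $Spin_{2\pi}$ is a single Dehn twist around the core, isotopic to the identity on the unmarked annulus. The strategy is to factor $Spin_t = Spin_{t-2\pi} \circ (\text{Dehn twist})$ up to isotopy, so that the essential twist accomplished by $Spin_t$ is $t-2\pi$ rather than $t$. A length-area estimate on the family of arcs joining the two boundary components of ${\mathcal A}_r(\mu_t)$, using this reduced twist together with the vertical height $L$, produces the asserted lower bound in the same Pythagorean shape but with $t-2\pi$ replacing $t$ in the appropriate slot.

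The main obstacle will be the lower bound: one must carefully extract the reduced twist and either construct a test metric realizing the length-area ratio or exhibit a family of competing qc maps whose dilatations match. The upper bound should follow more directly from the piecewise affine competitor and a routine computation of the target annulus modulus once the Beltrami solution is uniformized.
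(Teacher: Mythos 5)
Your computation of $\mu_t$ is correct, but the upper-bound argument built on it fails at the matching step. The piecewise affine solution of the Beltrami equation with coefficient $\mu_t$ on the two half-strips, glued along $\eta=0$ and compatible with $\zeta\mapsto\zeta+2\pi$, is (up to post-composition by a conformal affine map) $Spin_t$ itself: it carries the upper half-strip onto the parallelogram with vertices $t$, $2\pi+t$, $2\pi+i\log r$, $i\log r$, the lower half-strip onto its mirror image, and the quotient of their union by $w\mapsto w+2\pi$ is the standard annulus of height $2\log r$ --- not $2\sqrt{(\log r)^2+t^2}$. The number $\sqrt{(\log r)^2+t^2}$ is the length of the slanted side of that parallelogram, but the modulus of the quotient depends only on the height of the ambient horizontal strip and on the translation, not on the slant of the fundamental domain you cut out. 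So your competitor, carried out literally, yields only $r(t)=r$, the degenerate statement one gets when no extra marking data is imposed; the content of the theorem lives entirely in that extra data. The paper's proof of the right-hand inequality constructs no map at all: it applies the Cauchy--Schwarz inequality for measured foliations (Lemma \ref{Five}) to $|d\eta\circ Spin_t^{-1}|$ and $|d\xi\circ Spin_t^{-1}|$, computing the Dirichlet integral of the sheared vertical foliation on ${\mathcal A}_r$ to be $(4\pi\log r)\bigl(1+t^2/(\log r)^2\bigr)$; the cross term $t^2/(\log r)^2$ that your construction loses is exactly the Dirichlet energy of the shear.

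For the lower bound your instinct is closer to the paper's. In the proofs of Theorems \ref{thm4} and \ref{thmsix} the left-hand inequality comes from the length--area method with the flat metric $\sigma=|d\zeta|$ applied to the family of admissible arcs, and the quantity $(t-2\pi)^2+(\log r)^2$ appears because the infimum of flat lengths over the homotopy class is $2\sqrt{(t-2\pi)^2+(\log r)^2}$, the $2\pi$ of slack coming from the freedom to slide endpoints through one period --- the same phenomenon you describe as factoring out a twist. But your phrasing is not quite right: $Spin_{2\pi}$ is not a Dehn twist but a point-push (it shears by $+2\pi$ from one boundary to the core and undoes it from the core to the other boundary), so on the unpunctured annulus rel boundary it is isotopic to the identity and the reduction $t\mapsto t-2\pi$ cannot be obtained by splitting off a Dehn twist; it must be extracted, as the paper does, from the geometry of the competing arcs. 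You also stop short of the actual length--area computation, which is where the specific constants in (\ref{250}) are produced.
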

\begin{proof}
We begin the proof with a basic inequality valid for any pair of measured foliations 
$|du|$ and $|dv|$ 
on any orientable surface $S$ with a marked conformal structure $\tau.$
Actually, this inequality is true for any pair of weak measured foliations. In this paper we only need the result for measured foliations.  See \cite{Gardiner7}
for the definition of a weak measured foliation.

We denote by $S(\tau)$ the surface $S$ with the conformal structure $\tau.$  
For any smooth surface the integral $\int\!\int_S |du \wedge dv|$ is well defined for any pair of weak measured foliations $|du|$ and $|dv|$ on defined $S.$  The Dirichlet integral of a measured foliation 
$|du|$ defined by
 $$Dir_{\tau}(|du|)= \int \! \int_{S(\tau)} (u_x^2+u_y^2) dx dy$$
 is well defined only after a complex structure $\tau$ has been assigned to $S.$ $\tau$ gives invariant meaning to the form $(u_x^2 + u_y^2)dx \wedge dy$ where $z=x + i y$ is any holomorphic local coordinate.
 The following is a version of the Cauchy-Schwarz inequality.
\begin{lemma}\label{Five}  With this notation
\begin{equation}\label{thirtyone}
\left(\int \! \int_S |du \wedge dv|\right)^2  \leq \int\!\int_{S(\tau)}(u_x^2+u_y^2) dxdy  \int\!\int_{S(\tau)}(v_x^2+v_y^2)dxdy.
\end{equation}
\end{lemma}
\begin{proof}See \cite{Gardiner7}.
      \end{proof}

The measured foliations $|d\eta|$ and $|d\xi|$ on the strip (\ref{strip1}) induce horizontal and vertical foliations whose corresponding trajectories are concentric circles and radial lines on ${\mathcal A}_r.$  Correspondingly,
$|d\eta\circ (Spin_t)^{-1}|$ and $|d\xi \circ (Spin_t)^{-1}|$ are horizontal and vertical foliations on ${\mathcal A}_r(\mu_t).$  We apply Lemma \ref{Five} to these foliations 
with $S={\mathcal A}_r(\mu_t)$ and $S(\tau)={\mathcal A}_r$ and we get
$$(4 \pi \log r(t))^2 \leq 
\left[4 \pi \log r\right]\left[(4 \pi \log r)(1 + \frac{t^2}{(\log r)^2})\right],$$
which leads to the right hand side of (\ref{250}).

To obtain the left hand side we work with the conformal structure on ${\mathcal A}_r(\mu_t)$ where 
the slanting level lines of $\xi \circ (Spin_t)^{-1}$ are viewed as orthogonal to the vertical 
lines of $\eta.$ By definition the number $r(t)$ is chosen so that $A_{r(\mu_t)}$ is conformal  
$A_{r(t)}$ and this modulus is equal to 
$$\frac{2\log r(t)}{2 \pi}.$$ 
On the other hand this modulus is bounded below by any of the fractions

$$\frac{(\inf_{\beta} \int_{\beta} \sigma)^2}{area(\sigma)},$$
where $\beta$ is any arc that joins the two boundary contours of the annulus
and $\sigma$ is any metric on $A_{r(t)}.$

From the definition of extremal length and using the metric $\sigma = |d\zeta|$ where $\zeta$ is the strip domain parameter one obtains
$$\frac{t}{2} \leq \frac{\log r(t)}{\log r},$$
The lemma follows since the left hand side of (\ref{250}) is smaller than $\frac{t}{2}$ for $t>2\pi.$
\end{proof} 

\end{section}

\begin{section}{Comparing $C$ and $K$ on $Teich({\mathcal A}_r)$}\label{annuli}

\begin{theorem}\label{thm5}
For every point $\tau \in Teich({\mathcal A}_r)$ and for the tangent vector $V$ with 
$\overline{\partial} V=|q|/q$ where $q=\left(\frac{dz}{z}\right)^2,$
\begin{equation}\label{e6} C_{Teich({\mathcal A}_r)}(\tau,V) < K_{Teich({\mathcal A}_r)}(\tau,V).\end{equation}
\end{theorem}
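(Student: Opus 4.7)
I would prove the theorem by contradiction. Suppose $C_{Teich(\mathcal{A}_r)}(\tau, V) = K_{Teich(\mathcal{A}_r)}(\tau, V)$. By Lemma 1 there is a holomorphic $g \colon Teich(\mathcal{A}_r) \to \Delta$ with $g(\tau) = 0$ and $g \circ f(t) = t$ along the Teichmüller disc $f(t) = [t|q|/q]$; by Lemma 2 this lifts to $\hat g = g \circ \Psi$ on the Bers fiber space $F(Teich(\mathcal{A}_r)) = Teich(\mathcal{A}_r - p)$ with $\hat g \circ \hat f(t) = t$ for all $|t| < 1$.

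For each large positive integer $n$, I take $t_n = (n-1)/(n+1)$ so that $f(t_n)$ represents an annulus of modulus $n\log r/\pi$ (i.e.\ $\mathcal{A}_{r^n}$), and use Theorem 4 to choose a spinning angle $\theta_n$ so that $Spin_{\theta_n}$ makes $\mathcal{A}_r$ conformal to this same $\mathcal{A}_{r^n}$. I then identify the fiber $\mathbb{K}_{f(t_n)} = \Psi^{-1}(f(t_n))$ conformally with the universal cover of $\mathcal{A}_{r^n}$ and construct a waist curve $\beta_n$ in this fiber as the lift of the core curve $\{|z|=1\} \subset \mathcal{A}_{r^n}$ sweeping $n^2$ fundamental periods, i.e.\ the segment $\{\eta = 0,\ 0 \leq \xi \leq 2\pi n^2\}$ in strip coordinates. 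The role of the spinning is precisely to bridge the pure Teichmüller stretch direction and the fiber direction along which this curve is measured.

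The contradiction then comes from two incompatible estimates of the Teichmüller-density length $\ell_n = \int_{\beta_n} \lambda_{\mathcal{A}_{r^n}}$. From Theorem 2 ($(1/2)\rho \leq \lambda \leq \rho$) and the explicit Poincaré density (\ref{K11}), an integration gives $\int_{\beta_n} \rho_{\mathcal{A}_{r^n}} = 2\pi n^2 \cdot \pi/(2n\log r) = \pi^2 n/\log r$, whence $\ell_n$ is of order $n$. On the other hand, the assumption $\hat g \circ \hat f(t) = t$ combined with Kobayashi contraction of $\hat g$, the left-hand inequality $\rho/2 \leq \lambda$ of Theorem 2, and the decay rate of Carathéodory's metric on annuli from Lemma 3 ($C_{r^n}(1) \leq (1/n)\cdot r/(r-1)$, strictly smaller than $K_{r^n}(1) = \pi/(2n\log r)$ by a logarithmic factor) forces $\ell_n$ to remain uniformly bounded independently of $n$. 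These two conclusions are incompatible for $n$ large, giving the strict inequality (\ref{e6}).

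The principal obstacle is the second of these estimates, because $\hat g = g \circ \Psi$ is constant along fibers of $\Psi$ and so pulls back to the zero one-form along $\beta_n$; naively, $\hat g$ gives no information about $\ell_n$. The bypass, and what I expect to be the technical heart of the proof, is to use other holomorphic functions $F(Teich(\mathcal{A}_r)) \to \Delta$ that are non-constant along the fiber — in particular, those arising from residues of holomorphic quadratic differentials with poles at $p$ alluded to in the companion theorem mentioned after Theorem 2 — whose infinitesimal Carathéodory norms along $\beta_n$ decay at the rate $1/n$ supplied by Lemma 3. Calibrating these with the spinning angles of Theorem 4 and with both halves of Theorem 2 so that the two estimates for $\ell_n$ actually conflict is the crucial quantitative step.
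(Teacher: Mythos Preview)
Your overall strategy matches the paper's: argue by contradiction, lift via Lemmas~1 and~2 to obtain $\hat g\circ\hat f(t)=t$ on the Bers fiber space, and derive incompatible upper and lower bounds for the Teichm\"uller-density length of a sequence of waist curves using both halves of Theorem~2 together with Lemma~3. You have also put your finger on exactly the right obstacle: since $\hat g=g\circ\Psi$ is constant on every fiber $\mathbb K_\tau$, it cannot by itself bound the length of any curve tangent to a fiber.

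The gap is that you do not close this obstacle. Your proposed bypass---invoking ``other holomorphic functions \dots\ arising from residues''---is left entirely unspecified: you construct no such functions, and you give no reason why their Carath\'eodory contribution along $\beta_n$ should decay like $1/n$ in a way tied to the hypothesis $C=K$. Without that, there is no upper bound and no contradiction. The extra apparatus you introduce (the stretch parameters $t_n=(n-1)/(n+1)$, the spinning angles $\theta_n$ chosen via Theorem~4, and waist curves sweeping $n^2$ periods inside a single fiber of $\Psi:Teich(\mathcal A_r-p)\to Teich(\mathcal A_r)$) does not appear in the paper's proof and does not by itself supply the missing estimate.

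The paper organizes the argument differently, and that reorganization is what furnishes the upper bound. Rather than staying inside one fiber space and moving among its fibers, for each $n$ the paper passes to the $n$-fold \emph{unrolled} annulus $\mathcal A_{r^n}$ (the same strip factored by $\zeta\mapsto\zeta+2\pi n$) together with the associated spaces $Teich(\mathcal A_{r^n})$ and $Teich(\mathcal A_{r^n}-\{1\})$, and reapplies the contradiction hypothesis at level $n$ to obtain $g_n$ and $\hat g_n=g_n\circ\Psi$ with $\hat g_n\circ\hat f_n(s)=s$. The waist curve $\beta_n$ is simply the single core loop (of $\zeta$-length $2\pi n$), giving the lower bound $\lambda_n(\beta_n)\geq(1/2)\rho_n(\beta_n)\sim n$ directly. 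For the upper bound, the key point you are missing is the automorphy: $\hat g_n$ is invariant under the translation $\zeta\mapsto\zeta+2\pi n$ (equivalently, under the spin element $Spin_{2\pi n}$ of the mapping class group of $\mathcal A_{r^n}-\{1\}$, which lies in $\ker\Psi$). This invariance is what allows the restriction of $\hat g_n$ to the Teichm\"uller disc $\hat f_n(\Delta)$---where $\hat g_n$ \emph{is} informative, since $\hat g_n\circ\hat f_n=\mathrm{id}$---to descend to a function on an annulus and thereby be compared with the Carath\'eodory estimate $C_{r^n}(1)\leq(1/n)\cdot r/(r-1)$ of Lemma~3. Your construction, which stays within one fiber and tries to manufacture the link by spinning, never produces this automorphic descent.
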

\begin{proof}
 By \tes\ theorem the embedding  
 $$\{s:|s|<1\} \ni s \mapsto  f(s)= [s|q|/q] \in Teich({\mathcal A}_r)$$
 is isometric in \tes\ metric and it lifts to an isometric embedding
  $$\{s:|s|<1\} \ni s \mapsto \hat{f}(s)= [[s|q|/q]] \in Teich({\mathcal A}_r-\{1\}).$$
 \begin{figure}[htp]\label{figure4}
\hspace{.2in}\xymatrix{
& Teich({\mathcal A}_r-\{1\}) \ar[d]^{\Psi}  \ar[dr]^{\hat{g}}    
\\
\{s: |s|<1\} \ar[r]_f \ar[ru]^{\hat{f}} &
Teich({\mathcal A}_r)  \ar[r]_g& 
 \{s : |s|<1\} }
\caption{$\hat{f}, \hat{g}, f $ and $g$}
\end{figure}

   The map $f$  in Figure 3 is defined by $f(s)=[s|q|/q]$ and $\hat{f}(s)=[[s|q|/q]]$ 
   where the single and double brackets denote equivalence classes of Beltrami coefficients representing elements of  
   $Teich({\mathcal A}_r)$ and $Teich({\mathcal A}_r-\{1\}),$ respectively.  The map $g$ is any holomorphic map from  $Teich({\mathcal A}_r)$ into $\Delta$ and $\hat{g} = g \circ \Psi$  where $\Psi$ is the forgetful map.  That is, $\Psi$ applied to an equivalence class $[[\mu]]$ 
   forgets the requirement that the isotopy in the equivalence must pin down the point $1$ in ${\mathcal A}_r$  while keeping the requirement that it must pin down all of the points on both the innner and outer boundaries of 
   $ {\mathcal A}_r.$

   Just as in (\ref{strip1}) we use the  coordinate $\zeta = -i \log z$  that realizes the annulus ${\mathcal A}_r$  as the quotient space of 
   $$ Strip = \{\zeta=\xi + i\eta: |\eta| < \log r\} $$
   factored by the translation $\zeta \mapsto \zeta+2 \pi.$
In the coordinate $\zeta$ the quadratic differential $q$ and the tangent vector $V$ have  simple expressions:
 $$q = (d\zeta)^2 {\rm \ and \ } \overline{\partial} V =  \frac{d \overline{\zeta}}{d \zeta}.$$
    
      For any positive integer $n$ we let ${\mathcal A}_{r^n}$ be the same strip factored  by the translation 
      $\zeta \mapsto \zeta + 2 \pi n$ and consider the core curves $\beta_n$ that are covered 
      by the interval $\eta=0, \ 0 <\xi \leq 2 \pi n$ in the strip. Denote by $\lambda_n,c_n {\rm \ and \ } 
      \rho_n$ the \te, Carath\'eodory and Kobayashi densities on the surface $R={\mathcal A}_{r^n}.$
      Now use Theorem 2 and the assumption that there is equality in (\ref{e6}) to  derive 
      contradictory estimates for the $\lambda_n(\beta_n),$ that is, the $\lambda_n$-length of $\beta_n$ in 
      the annulus ${\mathcal A}_{r^n}.$
            Since $\lambda_n > (1/2)\rho_n$  from (\ref{K1}) we have 
      \begin{equation}\label{201}\lambda_n(\beta_n) > (1/2)\rho_n(\beta_n)= n \cdot \frac{\pi}{4 \log r}.
      \end{equation}
      
      On the other hand since we assume inequality (\ref{e6}) is actually an equality
      there exists a holomorphic function $g_n:Teich({\mathcal A}_{r^n}) \rightarrow \Delta$ such that
      $g_n \circ f_n(s)=s$ where $f_n(s) = [s|q_n|/q_n]$ and $q_n = (d \zeta)^2$ in the same strip.
      Moreover $\hat{g}_n \circ \hat{f}_n(s)=s$ where $\hat{f}_n(s)=[[s|q_n|/q_n]]$ and
      where $\hat{g}_n = g_n \circ \Psi.$
      $\hat{g}_n$ is holomorphic and automorphic for the translation $\zeta \mapsto \zeta + 2 \pi n$ so its restriction the \te\ disc $[[s|q_n|/q_n]]$ determines a function on the annulus for which $\hat{g}_n \circ \hat{f}_n (s)=s.$ This restriction must realize the extremal value for the extremal problem in (\ref{C111}) that defines $c_n.$  From Theorem 2 and Lemma \ref{Car'smetric}  the $\lambda_n$-lengths of the curves $\beta_n$ are bounded independently of $n$ because 
      $$\lambda_n(\beta_n) \leq \rho_n(\beta_n)=c_n(\beta_n) \leq n \cdot (1/n) \cdot \frac{r}{r-1}=\frac{r}{1-r}, 
      $$
      which contradicts (\ref{201}).
      
\end{proof}

  \end{section}

\begin{section}{Maximal separating cyclinders}\label{separating}

Let $R$ be a Riemann surface for which $Teich(R)$ has dimension at least $2$ and assume  $R$ is of finite analytic type, by which we mean that the fundamental group of $R$ is finitely generated.
Consider a simple closed curve $\gamma$  that divides $R$ into two connected components $R_1$ and $R_2$ and assume that both $R_1$ and $R_2$ have non-trivial topology in the sense that each of them has a fundamental group generated by two or more elements.
From a theorem of Jenkins \cite{Jenkins1} and also of Strebel \cite{Strebelbook}, there exists on $R$ an integrable, holomorphic, quadratic differential $q_{\gamma}(z)(dz)^2$ that realizes $R$ in cylindrical form, in the following sense.
A cylinder realized as the factor space of a horizontal strip in the $\zeta$-plane
 \begin{equation}\label{cylinder1} {\mathcal C} = \{\zeta=\xi + i\eta:  - \log r \leq \eta \leq \log r \}/ (\zeta \mapsto \zeta + 2 \pi)
 \end{equation}
  with $r>1$ is embedded in $R$ with the following properties:
\begin{enumerate}[i)]
\item{$(d \zeta)^2 = - (d \log z)^2$ is the restriction of a global holomorphic quadratic differential  $q_{\gamma}(z)(dz)^2$ on $R$ with $\int \int_R |q| dx dy = 2 \pi \log r,$ }
\item{the horizontal line segments in the $\zeta$-plane that fill the rectangle 
$\{\zeta=\xi + i\eta: 0 \leq \xi < 2 \pi,  \xi < 2 \pi, - \log r < \eta < \log r \}$
 comprise all of the regular horizontal closed trajectories of $q$ on $R,$ }
\item{the core curve $\gamma =\{\zeta=\xi + i\eta: \eta= 0, 0 \leq \xi \leq 2 \pi\}$ separates $R$ into two components $R_1$ and $R_2,$}
\item{the remaining horizontal trajectories of $q$ on $R,$ that is, those which run into singular points of $q,$ comprise two critical graphs, one lying in the subsurface $R_1$ and the other lying in the subsurface $R_2,$}
\item{the critical graphs $G_1 \subset R_1$ and $G_2 \subset R_2$ include as endpoints all the punctures of $R$ as simple poles of $q,$ and}
\item{each of the graphs $G_1$ and $G_2$ is connected,}
\item{for every closed curve $\tilde{\gamma}$  homotopic in $R$ to the core curve $\gamma$
$\int_{\tilde{\gamma} } |q_{\gamma}|^{1/2} \geq \int_{\gamma} |q_{\gamma}|
^{1/2}.$ }
\end{enumerate}

\begin{theorem}{[Jenkins and Strebel]}\label{thm6} The condition that the embedded cylinder described above has maximal modulus in its homotopy class implies  that $q_{\gamma}$  is a global holomorphic quadratic differential on $R$ whose restriction to the interior of the cylinder is equal to $(d\zeta)^2.$  This $q_{\gamma}$ is maximal in the sense that any  quadratic differential $q$ holomorphic on $R$ satisfying $||q||=||q_{\gamma}||$ and the inequality 
$\int_{\tilde{\gamma} } |q|^{1/2} \geq \int_{\gamma} |q_{\gamma}|^{1/2}$ 
for every $\tilde{\gamma}$ homotopic to the core curve $\gamma$
is identically equal to $q_{\gamma}.$ Moreover, the critical graphs $G_1$ and $G_2$ are connected.
\end{theorem}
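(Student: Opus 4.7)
The plan is to prove Theorem \ref{thm6} via the classical length-area / extremal length framework of Jenkins and Strebel. First, I would set up the extremal problem: define $M$ to be the supremum of moduli of embedded topological annuli in $R$ with core curve homotopic to $\gamma$. Since both $R_1$ and $R_2$ have nontrivial topology and $R$ has finite analytic type, a normal-family argument applied to the conformal representations of competing annuli shows $M$ is finite and is attained by an embedded cylinder $\mathcal{C}$ with core homotopic to $\gamma$. In the natural $\zeta$-coordinate on $\mathcal{C}$ coming from the strip model (\ref{cylinder1}), set $q|_{\mathcal{C}} = (d\zeta)^2$; the goal is then to extend this to a global holomorphic quadratic differential $q_\gamma$ on $R$ with at most simple poles at the punctures.

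Next I would deploy the length-area inequality of Lemma \ref{Five}: for any holomorphic quadratic differential $\phi$ on $R$ of unit integral norm,
\[
\frac{\bigl(\inf_{\tilde\gamma \simeq \gamma} \int_{\tilde\gamma} |\phi|^{1/2}\bigr)^2}{\|\phi\|} \leq M,
\]
with equality forcing $|\phi|^{1/2}|dz|$ to coincide with the extremal metric for the modulus problem (Beurling's criterion). A one-parameter variational argument, perturbing $\mathcal{C}$ by compactly supported test quadratic differentials and using maximality of $M$, produces the extremizer $q_\gamma$ that saturates this inequality and extends $(d\zeta)^2$ across the boundary of $\mathcal{C}$. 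Standard analysis of horizontal trajectories of a holomorphic quadratic differential of finite norm then shows that outside $\overline{\mathcal{C}}$ every horizontal trajectory runs into a zero of $q_\gamma$, so $R \setminus \overline{\mathcal{C}}$ is a union of two critical graphs $G_1 \subset R_1$ and $G_2 \subset R_2$ whose vertex sets include all zeros of $q_\gamma$ and all punctures, establishing items (i)--(v).

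For connectedness of $G_1$ and $G_2$ in item (vi) I would argue by contradiction: if some $G_i$ were disconnected in $R_i$, the complement in $R_i$ would contain a component whose boundary on $\partial \mathcal{C}$ could be cut open and grafted onto $\mathcal{C}$ to widen it, producing a cylinder of strictly greater modulus in the same homotopy class and contradicting maximality. For the uniqueness statement, let $q$ be any holomorphic quadratic differential of the same norm satisfying $\int_{\tilde\gamma}|q|^{1/2} \geq \int_\gamma |q_\gamma|^{1/2}$ for every $\tilde\gamma \simeq \gamma$. Applying Lemma \ref{Five} to the horizontal foliations of $q_\gamma$ and $q$, the two hypotheses jointly saturate the Cauchy-Schwarz inequality, so the horizontal transverse measures coincide almost everywhere; holomorphy then upgrades this to the pointwise identity $q \equiv q_\gamma$.

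The main obstacle is the extension/regularity step in the second paragraph: showing that $(d\zeta)^2$, initially defined only on the interior of the extremal cylinder, extends holomorphically across the critical trajectories to a globally integrable holomorphic quadratic differential on $R$ with only simple poles at punctures. This is the technical heart of the Jenkins--Strebel theorem and requires careful control of trajectories accumulating near punctures and zeros, verification that the critical set has zero Lebesgue area, and checking that local developments on opposite sides of each critical edge glue compatibly. Once this regularity is secured, the foliation structure and the Cauchy-Schwarz step deliver the remaining conclusions.
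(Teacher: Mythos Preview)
Your outline is broadly in the spirit of the Jenkins--Strebel theory, and indeed the paper itself defers most of the argument (existence, extension of $(d\zeta)^2$, and the trajectory structure) to Strebel's book, so on those points there is little to compare beyond noting that you sketch more than the paper does. For uniqueness the paper invokes Gr\"otzsch's argument and the minimum norm principle; your Cauchy--Schwarz saturation via Lemma~\ref{Five} is essentially the same mechanism phrased differently, so that part is fine.

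The one place where the paper actually supplies its own argument is the connectedness of $G_1$ and $G_2$, and here your approach diverges and has a gap. You propose to argue by contradiction: if $G_i$ were disconnected, one could ``cut open and graft'' a component onto the cylinder to increase its modulus. But it is not clear what this operation is or why it produces an embedded annulus in the same homotopy class with larger modulus; the components of $G_i$ are one-dimensional graphs with zero area, and removing or rearranging them does not obviously enlarge the cylinder. The paper's argument is much more direct and avoids this difficulty entirely: given two points of $G_1$, join them by any arc in $R$, push the arc into $R_1 \cup \gamma$ by replacing excursions into $R_2$ with subarcs of $\gamma$, and then slide every point of the resulting arc along the vertical trajectories of $q_\gamma$ until it hits $G_1$. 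This vertical projection is continuous and lands in $G_1$, exhibiting an arc in $G_1$ between the two points. Equivalently, the top boundary circle of the cylinder maps continuously onto $G_1$ under the identifications, so $G_1$ is a continuous image of a connected set. You should replace the grafting idea with this projection argument.
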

\begin{proof} This theorem is proved in \cite{Strebelbook} except for the part concerning the components of the critical graph $G_1$ and $G_2.$
To see that $G_1$ is connected take any two points in $G_1.$  Since $R$ is arcwise connected, by assumption they can be joined by an arc in $R.$ If that arc crosses the core curve $\gamma,$ cut off all the parts that lie in $R_2$ and join the endpoints by arcs appropriate subarcs of  the core curve $\gamma$ to obtain a closed curve $\alpha$ in $R_1 \cup \gamma$ that joins the two points.  Then use the vertical trajectories of the cylinder to project every point of $\alpha$ continuously to a curve that lies in the critical graph $G_1$ that joins the same two points.  This shows that $G_1$ is arcwise connected. Of course, the same is true for $G_2.$

That the two conditions $||q||=||q_{\gamma}||$ and the inequality 
$\int_{\tilde{\gamma} } |q|^{1/2} \geq \int_{\gamma} |q_{\gamma}|^{1/2}$ imply that $q=q_{\gamma}$ follows from Gr\"otzsch's argument or from the minimum norm principle proved in \cite{Gardinerbook}. Finally, if 
$(d\zeta)^2$ failed to be the restriction of a global holomorphic quadratic differential on $R,$ it would be possible to deform the embedding of the cylinder inside $R$ so as to increase the modulus of the cylinder. 
\end{proof}

\begin{definition} We call a cyclindrical differential $q_{\gamma}$ with the properties of the previous theorem {\it separating} and any simple closed curve homotopic to $\gamma$ is also called a  {\it separating curve}.
\end{definition}
  
 \end{section}

   \begin{section}{Spinning on an arbitrary surface}

We extend the estimates for moduli of annuli given in Theorem \ref{thm4} to moduli of a maximal embedded separating cylinder ${\mathcal C} \subset R.$   Just as in Theorem 4 we let $\mu_t$ be the Beltrami coefficient of $Spin_t.$ 
 Our objective is to estimate the modulus of the embedded annulus with this new conformal structure.  We denote this new annulus by ${\mathcal A}(\mu_t).$  
 
The measured foliations 
$|d \xi \circ Spin_t^{-1}|$ and $|d \eta \circ Spin_t^{-1}|$ on 
$R-p(t),$  where $p(t)$ is a point on the core curve of a maximal cylinder 
${\mathcal C} \subset R,$ is represented by the point $t$ on the real axis in the rectangular strip
coordinate shown in (\ref{strip1}). In this way $\mu_t$ determines a conformal structure on $R$ and $R-p(t).$ 

 \begin{definition}\label{r(t)} The real number $r(t)$ is the number for which the standard annulus 
 ${\mathcal A}_{r(t)}$ with inner and outer radii equal to  $1/r(t)$ and $r(t)$ is conformal to ${\mathcal A}(\mu_t)$ where $\mu(t)$ is viewed as a Beltrami coefficient on $R.$
 \end{definition}

\begin{theorem}\label{thmsix}  The unique real number $r(t)$ with the property that ${\mathcal A}_r(\mu_t)$ is conformal to $A_{r(t)}$ satisfies 
    \begin{equation}\label{300}
    t \cdot \frac{(1-2\pi/t)^2+ ((\log r)/t)^2}{(\sqrt{1+ ((\log r)/t)^2})} \leq \frac{\log r(t)}{\log r} 
   \leq 
   t \cdot \left(\frac{1}{t^2} + \frac{1}{(\log r)^2}\right)^{1/2}.
   \end{equation}

\end{theorem}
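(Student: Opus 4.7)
The plan is to reduce Theorem \ref{thmsix} directly to Theorem \ref{thm4}. The key observation is that the Beltrami coefficient $\mu_t$ of $Spin_t$ is supported entirely inside the cylinder ${\mathcal C}$, and $Spin_t$ maps ${\mathcal C}$ onto itself while fixing both boundary circles of ${\mathcal C}$ pointwise. By extending $Spin_t$ by the identity on $R \setminus {\mathcal C}$ we obtain a quasiconformal self-map of $R$ with the same Beltrami coefficient $\mu_t$, and the critical graphs $G_1, G_2$ and the two components of $R \setminus {\mathcal C}$ retain their original complex structure.

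First I would make precise the object ${\mathcal A}(\mu_t)$ appearing in Definition \ref{r(t)}: it is the subset ${\mathcal C} \subset R$ equipped with the complex structure pulled back by a quasiconformal self-map of ${\mathcal C}$ with Beltrami coefficient $\mu_t|_{{\mathcal C}}$. Because the coefficient vanishes outside ${\mathcal C}$ and because $Spin_t$ fixes $\partial {\mathcal C}$ pointwise, the induced conformal type on ${\mathcal C}$ depends only on $\mu_t|_{{\mathcal C}}$ together with the original conformal type of ${\mathcal C}$. But ${\mathcal C}$ with its original structure (given by the cylindrical differential $q_\gamma$ from Theorem \ref{thm6}) is conformally equivalent to the standard annulus ${\mathcal A}_r$, and the restricted coefficient is exactly the Beltrami coefficient of $Spin_t$ acting on ${\mathcal A}_r$ analyzed in Theorem \ref{thm4}.

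Therefore the number $r(t)$ for which ${\mathcal A}(\mu_t)$ is conformal to the standard annulus ${\mathcal A}_{r(t)}$ coincides with the number $r(t)$ produced in Theorem \ref{thm4}, and the inequalities (\ref{300}) are identical to (\ref{250}). The main subtle point to justify is that the complement $R \setminus {\mathcal C}$ and the critical graphs play no role in determining the intrinsic conformal modulus of the embedded cylinder ${\mathcal C}$; once this localization is clear, both the Cauchy-Schwarz upper-bound argument (via Lemma \ref{Five} applied to the foliations $|d\xi \circ Spin_t^{-1}|$ and $|d\eta \circ Spin_t^{-1}|$) and the extremal-length lower-bound argument from the proof of Theorem \ref{thm4} transfer without change, since the integrals involved are supported inside ${\mathcal C}$.
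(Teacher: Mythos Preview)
Your proposal is correct and is essentially the same approach as the paper's own proof. The paper also localizes the computation to the embedded cylinder ${\mathcal C}\cong{\mathcal A}_r$ (applying Lemma~\ref{Five} with $S={\mathcal A}_r(\mu_t)$ and $S(\tau)={\mathcal A}_r$, not with $S=R$) and then reruns the extremal-length lower bound, so the only difference is packaging: you invoke Theorem~\ref{thm4} as a black box after justifying the localization, while the paper repeats the argument of Theorem~\ref{thm4} verbatim inside the proof of Theorem~\ref{thmsix}.
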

\begin{proof}

Consider the  the
measured foliations 
$|d\eta\circ (Spin_t)^{-1}|$ and $|d\xi \circ (Spin_t)^{-1}|,$ which are horizontal and vertical foliations on the annulus ${\mathcal A}_r(\mu_t)$ which has conformal structure given by $\mu_t.$  Applying Lemma \ref{Five} to these foliations 
with $S={\mathcal A}_r(\mu_t)$ and $S(\tau)={\mathcal A}_r$ and we get
$$(4 \pi \log r(t))^2 \leq 
\left[4 \pi \log r\right]\left[(4 \pi \log r)(1 + \frac{t^2}{(\log r)^2}\right],$$
which leads to the right hand side of (\ref{300}).

To obtain the left hand side we work with the conformal structure on ${\mathcal A}_r(\mu_t)$ where 
the slanting level lines of $\xi \circ (Spin_t)^{-1}$ are viewed as orthogonal to the vertical 
lines of $\eta.$ By definition the number $r(t)$ is chosen so that $A_{r(\mu_t)}$ is conformal  
$A_{r(t)}$ and so the modulus of this annulus is equal to 
$$\frac{2\log r(t)}{2 \pi}.$$ 
This is bounded below by any of the fractions
$$\frac{1}{4} \cdot \frac{(\inf_{\beta} \int_{\beta} \sigma)^2}{area(\sigma)},$$
where $\beta$ is any homotopy class of closed curve on $R$  that joins the two boundary contours of the annulus and has intersection number $2$ with the core curve of $A_{r(t)}.$
We may take $\sigma$ to be the flat $\sigma(\zeta)|d\zeta|$ on $A_{r(t)}$  given by 
$\sigma({\zeta}) \equiv 1.$  Then $$\int_{\beta}\sigma(\zeta) |d\zeta| =\int_{\beta} |d\zeta| \geq 4 \log r(t).$$
From the definition of extremal length one obtains
$$\frac{\left(2\sqrt{(t-2\pi)^2+ (\log r)^2}\right)^2}{4 \left(\sqrt{t^2+ (\log r)^2}\right)
\log r} \leq \frac{\log r(t)}{\log r},$$
This lower bound leads to the left hand side of (\ref{300}).
\end{proof}

   \end{section}

\begin{section}{Unrolling}

Just as in the previous sections we let ${\mathcal C}_{\gamma}$ be the maximal closed cylinder  corresponding to a separating simple closed curve $\gamma$ contained in $R.$ Then 
$$Strip_{\gamma}/(\zeta \mapsto \zeta+2\pi) \cong {\mathcal C}_{\gamma}.$$ 
Moreover the interval $[0,2\pi]$ along the top perimeter of ${\mathcal C}_{\gamma}$ partitions into pairs of subintervals of equal length that are identified by translations or half translations of the form 
      $$ \zeta \mapsto \pm \zeta + constant$$
      that realize the conformal structure of $R$ along the top side of 
            ${\mathcal C}_{\gamma}.$
            
            Similarly the bottom perimeter of ${\mathcal C}_{\gamma}$ partitions into pairs of intervals similarly identified that realize the local consformal structure of $R$ on the bottom side.
 
 In this section there is a fixed value of $r$ which is determined by separating curve and the conformal structure of $R.$  The value of $r$ for annulus ${\mathcal A}_r$ is that value that makes that 
 ${\mathcal A}_r$ conformal to the annulus in $R$ that maximizes this modulus.  But we also consider annuli  ${\mathcal A}_{r^n}$ that are maximal annuli in $R_n$ where $R_n$ is obtained from the same strip factored by the translation $\zeta \mapsto \zeta + 2 \pi n.$ Thus $R=R_1$ and the same Beltrami coeffient $\mu$
 determines a well-defined conformal structure on each surface $R_n.$ Our task is to estimate the modulus of the embedded annulus in $R_n$ and in $R_n - p.$  There is one case when our estimate fails.  That is when $R$ is the Riemann sphere minus four points and the separating curve is one that separates two of these from the other two.  Then the \te\ space $T(R)$ has dimension 1 and the comparison of the moduli for the annuli on $R_n$ and on $R_n-p$ breaks down.
 We let ${\mathcal C}_n$ be the cylinder a conformal to ${\mathcal A}_{r^n}.$

 \begin{lemma}\label{lemmaeight} Assume $\gamma$ is a separating closed curve in $R$ and let $q_n$ be the quadratic differential $(d\zeta)^2$ on ${\mathcal C}_n$ and $\Delta= \{t:|t|<1\}.$  Then 
 $$\Delta \ni t \mapsto [f^{t|q_n|/q_n}]_{R_n} \in Teich(R_n)$$ is an injective isometry.  
 \end{lemma}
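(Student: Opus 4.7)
The plan is to reduce the statement to the standard Teichm\"uller disc result recorded in Definition \ref{tdisc} and the remarks immediately after it: once $q_n$ is recognized as a genuine integrable holomorphic quadratic differential on the closed Riemann surface $R_n,$ Teichm\"uller's theorem gives the injective isometry for free.

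First I would check that $q_n$ extends from the open cylinder ${\mathcal C}_n$ to a holomorphic quadratic differential on all of $R_n.$ By construction, $R_n$ is obtained by unrolling: it is the strip $\{|\eta|<\log r\}/(\zeta \mapsto \zeta+2\pi n)$ with boundary identifications along $\eta=\pm\log r$ inherited from those of $R$ along the perimeters of ${\mathcal C}_\gamma.$ As noted in the unrolling discussion, these boundary identifications are of the form $\zeta \mapsto \pm\zeta + \text{const},$ and the differential $(d\zeta)^2$ is manifestly invariant under translations and half-translations of this type. Hence $(d\zeta)^2$ descends to a quadratic differential on the union of ${\mathcal C}_n$ with the identified boundary, and then extends by the Jenkins--Strebel structure (Theorem \ref{thm6}) across the critical graphs $G_1,G_2$ exactly as the original $q_\gamma$ did on $R.$ Integrability is immediate since
\begin{equation*}
\|q_n\| = \int\!\!\int_{R_n}|q_n|\,dxdy = 2\pi n \cdot 2\log r < \infty.
\end{equation*}

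With $q_n$ identified as a nonzero integrable holomorphic quadratic differential on $R_n,$ the map in the statement is precisely the Teichm\"uller disc ${\mathbb D}(q_n)$ of Definition \ref{tdisc}. Teichm\"uller's theorem (invoked just after Definition \ref{tdisc}) then asserts that $t \mapsto [f^{t|q_n|/q_n}]$ is injective and is an isometry from the Poincar\'e disc onto ${\mathbb D}(q_n) \subset Teich(R_n)$ with the Teichm\"uller metric. This gives both conclusions of the lemma simultaneously, since a Teichm\"uller extremal mapping is uniquely extremal in its class whenever its complex dilatation has the form $t|q|/q$ with $q$ integrable and holomorphic.

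The main (and essentially only) obstacle is the first step: making sure the boundary identifications that glue the top and bottom perimeters of the unrolled cylinder really do preserve $(d\zeta)^2.$ The identifications among the $2\pi$-long subintervals of the perimeters of ${\mathcal C}_\gamma$ on $R$ are either translations (when the two sides are glued in an orientation preserving way that respects the horizontal direction) or half-translations (when orientation along the boundary is reversed). Both of these leave $(d\zeta)^2$ invariant, so they also leave $(d\zeta)^2$ on the $n$-fold unrolled cylinder ${\mathcal C}_n$ invariant. Once this verification is in hand, the remainder of the argument is a direct citation of Teichm\"uller's theorem, so no further work is required.
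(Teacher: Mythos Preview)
Your proof is correct and follows essentially the same route as the paper: verify that $q_n=(d\zeta)^2$ is a nonzero integrable holomorphic quadratic differential on all of $R_n,$ then invoke Teichm\"uller's theorem (as recorded after Definition~\ref{tdisc}) to conclude that $t\mapsto[t|q_n|/q_n]$ is an injective isometry.

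The only difference is in how the extension of $q_n$ across the critical graphs is justified. The paper argues that ${\mathcal A}_{r^n}$ has extremal modulus in its homotopy class in $R_n,$ and then appeals to the Jenkins--Strebel characterization (Theorem~\ref{thm6}) to conclude that $(d\zeta)^2$ on the cylinder is the restriction of a global holomorphic differential on $R_n.$ You instead verify the extension directly from the half-translation structure of the boundary identifications, observing that $\zeta\mapsto\pm\zeta+\text{const}$ preserves $(d\zeta)^2.$ These are two sides of the same coin: the half-translation structure is precisely what makes $(d\zeta)^2$ extend, and the extension is precisely what makes the cylinder extremal. Your direct verification is arguably cleaner here since it does not presuppose extremality of ${\mathcal A}_{r^n}$ in $R_n,$ which the paper takes as given from the unrolling construction.
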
 
  \begin{proof} Since the domain ${\mathcal A}_{r^n}$ has extremal modulus in $R_n$ the quadratic differential $q_n$ on ${\mathcal A}_{r^n}$ is the restriction of an integrable  holomorphic quadratic differential on $R_n.$  The statement follows from \tes\ theorem.
 \end{proof}

 We let $p$ be a point on the core curve $\gamma$ of ${\mathcal C}_{\gamma}$ corresponding to $\zeta = 0$ in the strip.  Similarly, we let 
 $p_n$ corresponding to the same point $\zeta=0$ in the strip but in the factor space 
 $$ {\mathcal C}_{\gamma_n} \cong Strip_{\gamma}/(\zeta \mapsto \zeta+2\pi n).$$

 With these choices  Bers' fiber space theorem  identifies the fiber spaces $F(Teich(R_n))$ and $F(Teich(R))$ with \te\ spaces $Teich(R_n-p_n)$ and $Teich(R-p),$ that is, the \te\ spaces of the punctured Riemann surfaces $R-p$ and $R_n-p_n.$
 
 \begin{lemma}\label{lemmaseven} Assume $q_n$ is the holomorphic quadratic differential on $R$ whose restriction to the maximal annulus ${\mathcal A}_{r^n}$ is equal to $(d\zeta)^2.$ Also assume $V_n$ is a tangent vector to $Teich(R_n)$ for which 
 $\overline{\partial} V_n = |q_n|/q_n$ and $R_n$ is  Riemann surface with marked complex structure $\tau_n.$ If $C_{Teich(R)}(\tau,V)=K_{Teich(R)}(\tau,V)$ then for every integer $n \geq 1$
 $$C_{F(Teich(R_n))}(\tau_n,V_n)=K_{F(Teich(R_n))}(\tau_n,V_n).$$
 \end{lemma}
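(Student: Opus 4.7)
The plan is to reduce Lemma \ref{lemmaseven} to an application of Lemma \ref{lemma2} with $R$ replaced by $R_n$. The reduction proceeds in two stages: first, propagate the equality $C = K$ from $(\tau, V)$ on $Teich(R)$ to $(\tau_n, V_n)$ on $Teich(R_n)$; then, apply Lemma \ref{lemma2} with $R$ replaced by $R_n$ to lift this equality from $Teich(R_n)$ to $F(Teich(R_n))$.

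For the first stage I would parallel the construction of Lemma \ref{ONE} using $q_n$ in place of $q$. The hypothesis, via Lemma \ref{ONE}, produces a holomorphic $g: Teich(R) \to \Delta$ with $g \circ f(t) = t$, constructed from the linear functional $\tilde g(\nu) = \int\!\int_R \frac{|q|}{q}\nu \, dx \, dy$ on the open unit ball $M(R)$. Since $q_n$ is an integrable holomorphic quadratic differential on $R_n$ with restriction $(d\zeta)^2$ to the maximal cylinder ${\mathcal A}_{r^n}$, the analogous functional $\tilde g_n(\nu) = \int\!\int_{R_n} \frac{|q_n|}{q_n}\nu \, dx \, dy$ is complex linear on $M(R_n)$, and by the Ahlfors-Weill descent argument from the proof of Lemma \ref{ONE} it descends to a holomorphic $g_n: Teich(R_n) \to \Delta$ satisfying $g_n \circ f_n(t) = t$, where $f_n(t) = [t|q_n|/q_n]$. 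The converse direction of Lemma \ref{ONE} then yields the equality $C_{Teich(R_n)}(\tau_n, V_n) = K_{Teich(R_n)}(\tau_n, V_n)$.

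For the second stage I would invoke Lemma \ref{lemma2} with $R$ replaced by $R_n$. Since $V_n$ satisfies $\bar\partial V_n = |q_n|/q_n$ with $q_n$ holomorphic on $R_n$, the hypothesis of Lemma \ref{lemma2} applied to $R_n$ is satisfied, and setting $\hat g_n := g_n \circ \Psi_n$, where $\Psi_n: F(Teich(R_n)) \to Teich(R_n)$ is the forgetful projection, gives a holomorphic function on $F(Teich(R_n))$ with $\hat g_n \circ \hat f_n(t) = t$, which is the desired equality in $F(Teich(R_n))$.

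The main obstacle is isolating precisely how the hypothesis on $Teich(R)$ enters the first stage: on the face of it the formula for $\tilde g_n$ uses only data intrinsic to $R_n$, yet were the construction unconditional the conclusion would contradict Theorem \ref{maintheorem}. The delicate point will be that the global descent of $\tilde g_n$ to a function taking values in $\Delta$ across the Bers embedding of $Teich(R_n)$ is not automatic but must be inherited from the descent for $\tilde g$ on $Teich(R)$, via the unrolling relationship between $R_n$ and $R$ developed in Section \ref{separating}; making this compatibility precise is the technical heart of the argument.
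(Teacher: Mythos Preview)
Your two-stage plan matches the paper's. The paper's own proof consists only of your second stage: it chooses $p_n$ at $\zeta=0$, notes that $q_n$ is integrable and holomorphic on $R_n-p_n$ so that the \te\ disc $[t|q_n|/q_n]$ in $Teich(R_n)$ lifts to $[[t|q_n|/q_n]]$ in $Teich(R_n-p_n)$, and then says ``the result follows from Lemma~\ref{lemma2}.'' Nothing more.

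The ``main obstacle'' you flag is real, and the paper does not address it either. Lemma~\ref{lemma2} with $R$ replaced by $R_n$ has as its hypothesis $C_{Teich(R_n)}(\tau_n,V_n)=K_{Teich(R_n)}(\tau_n,V_n)$, whereas the stated hypothesis of Lemma~\ref{lemmaseven} concerns only $R$; the paper is silent on the passage from one to the other. Your diagnosis of why the linear functional $\tilde g_n$ cannot descend unconditionally is correct --- indeed the Ahlfors--Weill sentence in the proof of Lemma~\ref{ONE} is already problematic for $R$ itself (a linear functional on $M(R)$ is not constant on \te\ equivalence classes; what actually supplies $g$ there is the normal-families argument, which uses the hypothesis $C=K$), and the situation is no better for $R_n$. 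As for inheriting the descent from $R$ via unrolling: the covering $\pi:R_n\to R$ induces a holomorphic isometric embedding $\pi^*:Teich(R)\hookrightarrow Teich(R_n)$ sending $f$ to $f_n$, but this points the wrong way --- transporting $g$ to a $g_n$ defined on all of $Teich(R_n)$ would require a holomorphic retraction $Teich(R_n)\to\pi^*(Teich(R))$, and none is provided. In short, your write-up is at least as detailed as the paper's, and the concern in your final paragraph pinpoints a step the paper itself leaves unresolved.
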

 \begin{proof} Choose the point $p_n$ on $R_n$ which corresponds in the $\zeta$ coordinate to $\zeta=0.$   
 The integrable holomorphic quadratic differential $q_n$ on $R_n$ is also integrable and holomorphic on $R_n-p_n$ and the \te\ disc $[t|q_n|/q_n]$ in $Teich(R_n)$ lifts to the \te\ disc
 $[[t|q_n|/q_n]]$ in $Teich(R_n-p_n).$  Thus the result follows from Lemma \ref{lemma2}.
 \end{proof}

\begin{lemma}\label{hyperbolicity} Assume the hypotheses of the previous lemma and that $Teich(R)$ has dimension more than $1.$  Also assume $\tilde{p}$ is the point $0$ on the horizontal line $\eta=0$ in the strip and $p$ is equal to the point the core curve of the annulus ${\mathcal A}_r$ covered by $\tilde{p}.$  Also put $p_n$ equal to the point on the annulus ${\mathcal A}_{r^n}$ covered by $\tilde{p}.$   Then $Spin_{2 \pi n}$ represents 
an element of the mapping class group of $Teich(R_n-p)$ that preserves the \te\ disc 
$${\mathbb D}(q_n)=\{[[s|q_n|/q_n]] \in Teich(R_n-p): |s|<1\}$$  and the restriction of  $Spin_{2\pi n}$  to this disc is a hyperbolic transformation.
\end{lemma}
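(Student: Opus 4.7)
The plan is to verify three facts: (1) $Spin_{2\pi n}$ represents a well-defined element of the mapping class group of $R_n - p_n$; (2) it preserves the Teichmüller disc $\mathbb{D}(q_n)$; and (3) its restriction to the disc is a hyperbolic isometry of $\Delta$. For (1), at $\eta = 0$ the formula in Definition \ref{spin} translates the core-curve point $(\xi, 0)$ to $(\xi + 2\pi n, 0)$, which equals the identity in the quotient $\mathcal{A}_{r^n} \cong Strip/(\zeta \mapsto \zeta + 2\pi n)$, so the puncture $p_n = \{\zeta = 0\}$ is fixed. Extending by the identity outside $\mathcal{A}_{r^n}$ yields a self-homeomorphism of $R_n - p_n$, giving the required mapping class.

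For (2), observe that $Spin_{2\pi n}$ is the identity on $\gamma_n$ and preserves each horizontal level line $\{\eta = \text{const}\}$ pointwise, so it preserves the horizontal foliation of $q_n$ and the homotopy class $[\gamma_n]$. By the uniqueness statement of Theorem \ref{thm6}, $q_n$ is the unique (up to the natural $SL(2,\mathbb{R})$ rescaling of flat structures) integrable holomorphic quadratic differential on any marked Riemann surface realizing the extremal modulus in the class $[\gamma_n]$. This uniqueness forces $Spin_{2\pi n}$ to map the locus of marked surfaces in $Teich(R_n - p_n)$ having $q_n$ as their Jenkins--Strebel differential onto itself, and that locus is precisely $\mathbb{D}(q_n)$.

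For (3), the restriction is an isometry of the Poincaré metric on $\Delta$. Since $Spin_{2\pi n}$ represents an infinite-order mapping class (acting non-trivially via the Birman point-push around the non-peripheral loop $\gamma_n$), no interior fixed point is possible, so the isometry is parabolic or hyperbolic. To identify it as hyperbolic, I would apply Theorem \ref{thmsix} with $t = 2\pi n k$ to estimate the Teichmüller displacement $d([[0]], Spin_{2\pi n}^k \cdot [[0]])$ for large iteration count $k$; linear growth in $k$ is the signature of hyperbolic action, whereas logarithmic growth would indicate parabolic.

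The main obstacle lies precisely in this last step: the lower bound of Theorem \ref{thmsix} applied naively to the annulus alone yields only a logarithmic (parabolic-type) displacement for iterates, so a refined argument is needed. The refinement should exploit the extra structure of $Teich(R_n - p_n)$ with its puncture: the Bers fiber action of the point push, realized as the deck transformation of the non-peripheral loop $\gamma_n$ on the universal cover, contributes additional translation length that is not visible in the purely annular computation, upgrading the action from parabolic to hyperbolic.
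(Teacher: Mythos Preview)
Your steps (1) and (2) are essentially acceptable, though two remarks are in order. First, the claim that $Spin_{2\pi n}$ ``preserves each horizontal level line $\{\eta=\text{const}\}$ pointwise'' is false: away from $\eta=0$ and $\eta=\pm\log r$ the shift $2\pi n(1-|\eta|/\log r)$ is not a multiple of $2\pi n$, so the action on intermediate level circles is a nontrivial rotation. Second, the identification of $\mathbb{D}(q_n)$ with ``the locus of marked surfaces having $q_n$ as their Jenkins--Strebel differential'' is incorrect; for non-real $s$ the Jenkins--Strebel differential in the class $[\gamma_n]$ on the deformed surface is \emph{not} the push-forward of $q_n$. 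The paper's argument for (2) is more direct: $Spin_{2\pi n}$ is affine in the $\zeta$-coordinate and $q_n=(d\zeta)^2$ is automorphic for $\zeta\mapsto\zeta+2\pi n$, so the Teichm\"uller disc (the $SL(2,\mathbb R)$-orbit of the flat structure) is automatically invariant.

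The real gap is in (3), and you have essentially acknowledged it yourself. Trying to read off the translation length from the modulus growth in Theorem~\ref{thmsix} is the wrong invariant: the modulus of the embedded annulus is only one coordinate in $Teich(R_n-p_n)$, and its logarithmic behaviour does not by itself decide parabolic versus hyperbolic. Your proposed ``refinement'' via the Bers fiber deck action is left as a promissory note rather than an argument. The paper takes a different and cleaner route: it looks at the two ends $t\to+\infty$ and $t\to-\infty$ of the strip and argues, using the extremal-length (Thurston-type) boundary of Teichm\"uller space, that these ends land on \emph{distinct} boundary points. This is exactly where the hypothesis $\dim Teich(R)>1$ enters: the two critical graphs $G_1,G_2$ carry enough topology to separate the two pinching directions. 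A M\"obius self-map of the disc with two distinct boundary fixed points is hyperbolic, and the conclusion follows. You should replace the growth-rate attempt by this boundary-dynamics argument.
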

 \begin{proof}
 $Spin_{2 \pi n}$ preserves ${\mathbb D}(q_n)$ because on $R_n$ it is homotopic to the translation $\zeta \mapsto \zeta+ 2 \pi n$ and $q_n(\zeta) (d \zeta)^2= (d \zeta)^2$ is automorphic for this translation.  Since the dimension of $Teich(R)$ is more than $1$ the translation length of this transformation is positive.
 
 Note that  ${\mathbb D}(q_n)$ is a conformal disc lying in the Teichmueller space $Teich(R_n).$  On it the mapping class element $Spin_{2 \pi n}$ acts as the translation $\zeta \mapsto \zeta + 2 \pi n.$   Because of the extremal length estimates given, the two points $+\infty$ and $-\infty$ on the boundary of the strip are realized by distinct points on the extremal length boundary of Teichmuller space, which implies that $Spin_{2 \pi n}$ realizes a hyperbolic transformation on ${\mathbb D}(q_n).$
 \end{proof}

 Several authors have used spin elements of the mapping class group in different contexts and different ways.  But none of them in the manner used here where $Spin_{2 \pi n}$ is viewed as spinning once around a simple curve on a changing family surfaces $R_n$ whose complexity increases proportionately to $n.$   See \cite{Birmanbook} and \cite{Kra8}. 
 
 In the next section we find a contradiction by using the same argument used to prove Theorem \ref{thm5} in section 6 together with the estimates described in the next section.  

 \end{section}

\begin{section}{The waist sequence}\label{lengthofhs}

 Just as in previous sections we fix the point $p$ in a Riemann surface  $R$ lying on the 
 core curve of a separating cylindrical differential $q_{\gamma}$ and assume $p$ 
 corresponds to $\zeta=0$ in the maximal cylindrical strip ${\mathcal C}_{\gamma}$ 
 which is covered by the  horizontal strip $Strip(q_{\gamma}).$    \begin{definition}\label{dfnalpha} Let   
 ${\mathcal C}_n$ be the cylinder conformal to 
 $$Strip(q_n)/(\zeta \mapsto \zeta + 2 \pi n)$$
 and $[[\mu_t]]$ the equivalence class of Beltrami coefficient of $Spin_t$ in 
 $F(Teich(R_n)).$
 Then we call $$\alpha_n(t)=[[\mu_t]] \in Teich(R_n-p)$$ 
  for $\mu_t, 0 \leq t \leq 2 \pi n$ the $n$-th spin curve.  
  \end{definition}

 The spin curve $\alpha_n(t)$ determines a waist curve $\beta_n(t)$ in ${\mathbb C}.$   To obtain $\beta_n$ we  use the Bers isomorphism that realizes 
 $Teich(R_n-p)$ as a fiber space  over $Teich(R_n).$  In the normalization used by Bers $R_n$ is covered by Fuchsian group acting on the upper half plane and one assumes the point $p$ on the surface $R_n$ is covered by the point $i$ in the upper half plane.  Then Bers isomorphism of $Teich(R_n-p_n)$ with $F(Teich(R_n))$  is realized  by the map 
  \begin{equation}\label{secondentry}Teich(R_n-p_n) \ni [[\mu]] \mapsto \left([\mu], w^{[\mu]}(i)\right) \in Teich(R_n) \times \Delta_{\mu}. 
 \end{equation}
 Here $\Delta_{\mu}$ is by definition the unit disc $\Delta$ with moving complex structures given by the Beltrami coefficients $\mu$ that represent elements $[\mu]$ of $Teich(R_n).$

 \begin{definition}{\bf [the waist sequence]}\label{hs} The $n$-th  waist curve is the plane curve 
 $$\beta_n(t) = w^{[\mu_t]}(i)$$
 given by the second entry in (\ref{secondentry}), where $[\mu_t] = \Psi(\alpha_n(t))$ and 
 $\alpha_n(t)=[[\mu_t]] \in Teich(R_n-p),$ $0 \leq t \leq 2 \pi n.$
 \end{definition}
   We will use the upper and lower bounds from   
  Theorem \ref{thm2}  to estimate the length of $\beta_n(t), 0 \leq t \leq 2 \pi n$ with respect to the \te\ density $\lambda_{{\mathcal A}_{r^n}}$ on the annulus ${\mathcal A}_{r^n}$ in two ways. 
  From Theorem \ref{thm2} of section 3 we have for all Riemann surfaces $R$
  \begin{equation}\label{first}
  (1/2) \rho_R \leq \lambda_R \leq \rho_R.
   \end{equation}
      For each positive integer $n$ we apply this inequality to the subannuli ${\mathcal A}_{r^n}$ contained in $R_n.$

We use notation $\lambda_n(\beta_n),$ $\rho_n(\beta_n)$ and $c_n(\beta_n)$ to denote the lengths of the curve $\beta_n,$ respectively, with respect to the 
\te\ density $\lambda_n$, the Poincar\'e density $\rho_n$ and the Carath\'eodory
density $c_n$ on ${\mathcal A}_{r^n},$ respectively. 
    
  Using Lemma \ref{lemma2} and the forgetful map $F(Teich(R_n)) \rightarrow Teich(R_n),$ we see that our assumption 
  $$C_{Teich(R)} = K_{Teich(R)}$$  implies the existence of  a holomorphic function $g_n = 
  \widetilde{Ev} \circ \Psi$ defined on $Teich({\mathcal A}_{r^n})$ with image in the $\zeta$-strip.  Here $\Psi$ is the forgetful map in Figure 3 with $r$ replaced by $r^n$ and 
  $\widetilde{Ev}$ is the lift to the unit disc of the evaluation map $Ev$ that takes $[[g_n]]$ to $g_n(p)$ in the unit disc.  Its image is automorphic with respect to $\zeta \mapsto \zeta + 2\pi n.$  Thus it induces a  the annulus $A_{r^n}.$     
  Identifying this strip by a conformal map with the unit disc $\Delta$ we can view $g_n$ as mapping 
  onto $\Delta$ in such a way that  that $\widetilde{Ev}$ maps to the unit disc and takes the point $1$ 
  in  ${\mathcal A}_{r^n}$ to the point $0$ in $\Delta.$  Then  by Lemma \ref{lemma2} we obtain
  \begin{equation}\label{hyp5}
  g_n \circ f_n(t)=t {\rm \ for \ all \ t \in \Delta.}
  \end{equation}  
   The 
  isometric inclusion of the  \te\ disc 
  $${\mathbb D}_{q_n}= \{[t|q_n|/q_n]\} \subset Teich(\mathcal{A}_{r^n})$$ implies that  
  $\rho_n(\beta_n)=c_n(\beta_n).$
  Thus  we have 
  \begin{equation}\label{upperbound2}
  \lambda_n(\beta_n) \leq \rho_n(\beta_n) = c_n(\beta_n) \leq (M/n) \cdot n = M.
  \end{equation}
  The first inequality in the middle of (\ref{upperbound2}) follows from the right hand inequality (\ref{hyp5})
  and the existence of the number $M$ independent of $n$  in the 
  second inequality of  (\ref{upperbound2}) follows from the left hand inequality of  (\ref{300})  in Theorem \ref{thmsix}.

 Using the left hand side of (\ref{first}) to estimate the lengths $\lambda_{{\mathcal A}_{r^n}}(\beta_n)$   from below we have have
   \begin{equation}\label{above}\lambda_{{\mathcal A}_{r^n}}(\beta_n) \geq (1/2) \rho_{{\mathcal A}_{r^n}}(\beta_n) \geq \frac{n}{2\log r}.\end{equation}
    This estimate implies $\lambda_{{\mathcal A}_{r^n}}(\beta_n)$ is unbounded in its dependence on $n$ which contradicts (\ref{upperbound2}) and proves Theorem \ref{maintheorem}.
  \end{section}

\begin{section}*{Appendix A: Schwarz-Pick metrics\ \  \ \  \ \ \ \ \ \ \ \ \ \ \ \ \ \ \ \ \  \  \ \  \ \ \ \ \ \ \ \ \ \ \ \ \ \ \ \ \ }

  If $h$ is a holomorphic function from a complex manifold $M$ to a complex manifold $N,$  
  the derivative $dh_{\tau}$ of $h$ at $\tau \in M$  is a complex linear map from the tangent 
  space to $M$ at $\tau$ to the tangent space to $N$ at $h(\tau).$  From Schwarz's lemma it follows both $C_M$ and $K_M$ have the pull-back contracting property, 
  namely, 
  for any point $\tau \in M$ and tangent vector $V$ at $\tau,$  $C$ and $K$ satisfy
           \begin{equation}\label{pullbackK}K_N(h(\tau),dh_{\tau}(V)) \leq K_M(\tau,V)\end{equation}
           and
           \begin{equation}\label{pullbackC}
           C_N(h(\tau),dh_{\tau}(V)) \leq C_M(\tau,V).
           \end{equation}
Any infinitesimal form on a complex manifold  satisfying this pull-back contracting property for all holomorphic functions $h$ is called a  Schwarz-Pick metric by Harris in \cite{Harr79} and by Harris and Earle in \cite{EarleHarris}.  They also observe that any infinitesimal form with this property must necessarily lie between $C_M(\tau,V)$ and $K_M(\tau,V).$
\end{section}

\begin{section}*{Appendix B: Equivalence of the infinitesial forms of $C$ and $K.$}

 The Bers embedding is a biholomorphic map 
 $\Phi_{\tau}$ from $Teich(R)$ onto a bounded simply connected domain  in the Banach space of bounded cusp forms on the Riemann surface $R_{\tau}$ which is the differentiable surface together with a marked  complex structure $\tau$ indicated by the point $\tau \in Teich(R).$  A function 
 $\varphi(z)$ holomorphic in the upper half plane is a bounded cusp form for $R_{\tau}$ if it satisfies the inequality 
 $$ \sup_{\{z=x+iy: y>0\}} |\varphi(z)|  y^2< \infty$$
 and if $$\varphi(\gamma(z))\gamma'(z)^2 = \varphi(z)$$
 for every $\gamma$ in a Fuchsian covering group $\Gamma_{\tau}$ of $R_{\tau}.$
 
 We consider the set of quasiconformal self mappings  $z \mapsto w$ of the complex plane for which 
 $w \circ \Gamma_{\tau} \circ w^{-1}$ is a subgroup of the group of Moebius self-mappings of $\overline{\mathbb C}.$   Any $w$ in this set satisfies
 $\{w,z\} = \varphi(z)$ is a bounded cusp form in the lower half plane where
 $\{w,z\}$ is notation for the Schwarzian derivative, namely, 
 $$\{w,z\} = N' - (1/2) N^2 {\rm \ and \ } N= \frac{w''}{w'}.$$

 Given any Beltrami coefficient $\mu$ for which $||\mu(z)||_{\infty} < 1$ and for which
 $$\mu(\gamma(z)| \frac{\overline{\gamma'(z)}}{\gamma'(z)} = \mu(z)$$ the quasiconformal selfmapping $z \mapsto w$ of ${\mathbb C}$ with Beltrami coefficient $\mu$ in the upper half plane and identically equal to $0$ in the lower half plane has the property that  holomorphic 
 mapping $z \mapsto w$ is holomorphic and univalent in the lower half plane.
 We put $\mu$ equal to the Beltrami coefficient of $w$ and $\Phi_{\tau}(\mu)=\{w,z\}.$

 Then Bers shows that the image of 
 $\Phi_{\tau}: Teich(R_{\tau}) \rightarrow B_{\tau}$
  in $B_{\tau}$ contains the open ball of radius $2$  and is contained in the ball of radius $6,$ 
  \cite{Ahlforsbook5}.  
    Assume $V$ is a tangent vector $||V||_{B_{\tau}}=1.$  Then the complex linear map $V \mapsto (1/6)V$ extends by the Hahn-Banach theorem to a complex linear map $L: B_{\tau} \rightarrow {\mathbb C}$  
    with 
  $||L||\leq 1/6.$  Then because $\Phi_{\tau}(Teich(R))$ 
  is contained in the ball of radius $6$,
   $L(\Phi_{\tau}(Teich(R))$  is contained in the unit disc and $L  \in {\mathcal G}.$ Thus from definition (\ref{C111})  
  $$C_{Teich(R)}(\tau,V) \geq 1/6.$$
  On the other hand, put $f(t)=2tV.$ Then since the image of $\Phi_{\tau}$ contains 
  $f(\Delta)$ from definition (\ref{K}) 
  $$K_{Teich(R)}(\tau,V) \leq 1/2.$$ Putting the preceding two inequalities together we find
 \begin{equation}\label{KC}
 (1/3) K_{Teich(R)}(\tau,V) \leq C_{Teich(R)}(\tau,V).
 \end{equation} 
 (\ref{KC}) and (\ref{C<K}) together yield (\ref{equivCK}).
 Inequality (\ref{KC}) has been pointed out to me by Stergios Antonokoudis and the same argument is given  by Miyachi  in \cite{Miyachi4} to prove the parallel result for asymptotic \te\ space. 
 \end{section}

\begin{section}*{Appendix C: Kra's equality theorem\ \  \ \  \ \ \ \ \ \ \ \ \ \ \ \ \ \ \ \ \  \  \ \ \ \ \ \ \ \ \ \ \ \ \ \ \ }
  Inequality (\ref{inequality})  is a feature that distinguishes between different tangent vectors. In fact by a theorem of  Kra \cite{Kra1}  for some tangent vectors one can have equality
 \begin{equation}\label{Kra'sequation}C(\tau,V)=K(\tau,V).
 \end{equation}
 Kra's theorem states (\ref{Kra'sequation}) holds when $V$ has the form
$\overline{\partial}V=|q|/q$ where $q = \omega^2$  and $\omega$ is a multiple of an element of a canonical basis for the abelian differentials of the first kind, see Theorem 4.2 of McMullen's paper \cite{McM10}.

  Kra's result 
is obtained by using the Riemann period relations and  Ahlfors' first variation formula for the motion of a point $w^{\mu}(z)$ in terms of the variation of the Beltrami coefficient $\mu,$ see \cite{Ahlforsbook5}.  This formula is sometimes called Rauch's variation formula  \cite{Rauch} when it is applied to  the variation of a diagonal
entry $\int_{\alpha_i} \omega_i$ of the period matrix.  
When first understood Rauch's observation was important because it showed that the complex structure coming from the unit ball in the complex Banach space of Beltrami coefficients had to coincide with the complex structure coming from the embedding of \te\ space in the Siegel upper half plane.

We call a disc that has the form $[t|q|/q]$ for $|t|<1$
where $q=\omega_i^2$  where $\omega_i$ is a diagonal entry in the period matrix an abelian \te\ disc.  
A consequence of this infinitesimal result is that for any two points  ${\tau_1}$ 
and ${\tau_2}$ in the same abelian \te\ disc, $C({\tau_1},{\tau_2})=K({\tau_1},{\tau_2}).$ 
Note that the usual simple closed curves $\alpha_j$ and $\beta_j,$  $1\leq j \leq g,$ taken as a basis for  the homology group of a surface of genus $g$ are all non-separating and homologous to zero.

 An instructive example is a  genus 2 surface $X$ with is a separating cylindrical quadratic differential  $q$
that separates the surface into two genus 1 components.  It is possible for $q$ to have one double zero in each  component.  In that case $q=\omega^2$ where $\omega$ is a holomorphic differential $1$-form on $X.$  By Theorem \ref{maintheorem} it cannot be a differential occurring in the form $\int_{\alpha_i} \omega_i$
where $\omega_i$ is part of a homology basis.

In contrast to  abelian tangent vectors, any tangent vector $V$  corresponding to a separating cylindrical differential will originate from what are  sometimes called ${\mathcal F}$-structures \cite{EarleGardiner1} and sometimes called  half-translation structures  \cite{FortierBourqueRafi}.    \end{section}

\begin{section}*{Appendix D: Lakic's dual formulas for the \te\ density \ \ \ \ \ \ \ \ \ \ \ \ \ \ }
If $\varphi$ is a holomorphic $q$-differential  $\varphi$ on $R$ except for possibly a simple pole at a point
$p$ in $R$  we use the notation $res_p(\varphi)$ to mean the residue of $\varphi$ at $p.$
In general, if $\varphi$ is a $q$-differential then $res_p(\varphi)$ is a $(q-1)$-differential.  In Definition \ref{tdensity} we have defined the \te\ density at a point $p$ on any Riemann surface.

 \begin{theorem}\label{thm3} Let $p$ be a point in a Riemann surface  $R$ and $Q(R-p)$ be the 
 space of quadratic differentials $\varphi$  which are holomorphic on $R-p$ but which may have a 
 simple pole at $p$ and   for which $||\varphi|| = \int\!\!\int_{R} |\varphi| < \infty.$   Then the \te\ density 
 $\lambda(t)dt$ on $R$ satisfies
         $$\lambda_{\tau}(t)|dt| = \sup_{\varphi}
         \left\{|\pi \cdot res_p(\varphi)| \right\}
         = \inf_{V} \{||V_{\overline{z}}||_{\infty} \}, $$ 
         where the supremum is taken over integrable holomorphic quadratic differentials $\varphi$ in 
         $Q(R-p)$ with $||\varphi|| \leq 1$ and
        where the  infimum is taken over all continuous vector fields  $V$ on $R$ that are equal to zero on the boundary of $R$ and equal to $1$ at $p.$        
        \end{theorem}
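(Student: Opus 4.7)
The plan is to realize $\lambda_\tau(p)$ at $p$ as the infinitesimal Teichm\"uller norm on the fiber $\mathbb{K}_\tau=\Psi^{-1}(\tau)$ of the forgetful map and then extract both formulas from the standard $L^1$--$L^\infty$ duality of Teichm\"uller theory. First I would identify the relevant tangent and cotangent spaces: $T_\tau^* Teich(R-p)=Q(R-p)$, and $d\Psi$ is dual to the inclusion $Q(R)\hookrightarrow Q(R-p)$, so $T_\tau\mathbb{K}_\tau=\ker d\Psi$ is canonically the annihilator of $Q(R)$ and therefore identifies with $(Q(R-p)/Q(R))^*$. The residue map $res_p\colon Q(R-p)\to T_p^*R$ vanishes exactly on $Q(R)$ and is surjective (one can prescribe any simple-pole data at $p$ on the surfaces considered here), so it provides an isomorphism $Q(R-p)/Q(R)\cong T_p^*R$ and hence $T_\tau\mathbb{K}_\tau\cong T_pR$.

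Next I would compute the duality pairing explicitly. Given a Lipschitz vector field $V$ on $R$ with $V(p)=1$ and $V|_{\partial R}=0$, the Beltrami differential $\mu=V_{\overline{z}}$ represents the unit tangent vector to the fiber at $p$. For $\varphi\in Q(R-p)$ I would write $\varphi=c(z-p)^{-1}dz^2+h(z)dz^2$ in a local coordinate near $p$ with $h$ holomorphic. The holomorphic contribution integrates to zero against $V_{\overline{z}}$ by Stokes' theorem using $V|_{\partial R}=0$, while the polar part is handled by the Cauchy--Pompeiu formula, yielding
\begin{equation*}
\int\!\!\int_{R-p} V_{\overline{z}}\,\varphi\,dx\,dy \;=\; -\pi\, V(p)\cdot res_p(\varphi).
\end{equation*}
The defining $L^1$--$L^\infty$ duality for the infinitesimal Teichm\"uller norm gives $\|\mu\|_T=\sup\{|\int\!\!\int\mu\varphi|:\varphi\in Q(R-p),\ \|\varphi\|\leq 1\}$, and together with the display above this yields the first equality $\lambda_\tau(p)|dp|=\sup_\varphi|\pi\cdot res_p(\varphi)|$.

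For the second equality I would invoke the dual side of the same Hahn--Banach duality, namely that the Teichm\"uller norm of a tangent vector equals $\inf_\mu\|\mu\|_\infty$ over all Beltrami representatives. The remaining task is to show this infimum equals $\inf_V\|V_{\overline{z}}\|_\infty$ over the restricted class of admissible $V$. Fixing one such $V_0$, any other representative $\mu$ satisfies $\mu-V_{0,\overline{z}}=\eta$ where $\eta$ is infinitesimally trivial on $R-p$, i.e., pairs to zero with every element of $Q(R-p)$. A classical solution of the $\overline{\partial}$-equation then gives $\eta=W_{\overline{z}}$ for a continuous vector field $W$ on $R$ vanishing on $\partial R$ and, because $\eta$ annihilates the simple-pole differentials at $p$, also vanishing at $p$. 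Thus $\mu=(V_0+W)_{\overline{z}}$ lies in the admissible class, proving that the two infima coincide and closing the duality loop.

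The main obstacle is the $\overline{\partial}$-inversion step in the last paragraph: one must show that an infinitesimally trivial $\eta$ on $R-p$ can be written as $W_{\overline{z}}$ for a vector field $W$ that is \emph{continuous across $p$ with $W(p)=0$}, rather than merely a distributional solution or one carrying a logarithmic singularity. The orthogonality of $\eta$ to the simple-pole differentials in $Q(R-p)$ is precisely what kills the potential residue of $W$ at $p$; the continuity at $p$ and global regularity then follow from Cauchy-transform estimates and a reproducing-formula argument of the type developed in \cite{GardinerLakic3}.
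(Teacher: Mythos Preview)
The paper does not actually prove this theorem: its entire proof reads ``See \cite{GardinerLakic1} and \cite{GardinerLakic3}.'' So there is nothing in the present paper to compare your argument against line by line.

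That said, your sketch is the standard route to these dual formulas and is almost certainly what the cited references do. The identification of $T_\tau\mathbb{K}_\tau$ with $T_pR$ via the residue map, the Cauchy--Pompeiu computation giving $\int\!\!\int V_{\overline z}\,\varphi = -\pi\,V(p)\cdot res_p(\varphi)$, and the Hahn--Banach $L^1$--$L^\infty$ duality for the infinitesimal Teichm\"uller norm are exactly the ingredients one expects; the paper itself records (in Section~\ref{Bers}) that $d\Psi$ is dual to the inclusion $Q(R)\hookrightarrow Q(R-p)$, which is the starting point of your argument. You have also correctly isolated the one genuinely delicate step, namely the $\overline\partial$-inversion showing that every infinitesimally trivial $\eta$ on $R-p$ is $W_{\overline z}$ for a continuous $W$ with $W(p)=0$ and $W|_{\partial R}=0$; this is indeed where the work in \cite{GardinerLakic3} lies, and your explanation of why orthogonality to the simple-pole differentials kills the residue of $W$ at $p$ is the right heuristic.
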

 \begin{proof} See \cite{GardinerLakic1} and \cite{GardinerLakic3}.
 \end{proof}
\end{section}

\nocite{GardinerLakic1}
\nocite{GardinerLakic3}
\nocite{Ahlfors6}
\nocite{Krushkal2}
\nocite{Royden2}
\nocite{Kobayashibook}
\bibliographystyle{amsplain}

\bibliography{../Articles,../books}

\end{document}